\newtheorem{theorem}{Theorem}[section]
\newtheorem{lemma}[theorem]{Lemma}
\newtheorem{proposition}{Proposition}[section]
\newtheorem{corollary}{Corollary}[section]
\theoremstyle{definition}
\newtheorem{definition}[theorem]{Definition}
\theoremstyle{remark}
\newtheorem{remark}[theorem]{Remark}
\numberwithin{equation}{section}
\newcommand\be{\begin{equation}}
\newcommand\ee{\end{equation}}
\newcommand\bea{\begin{eqnarray}}
\newcommand\eea{\end{eqnarray}}
\newcommand\bi{\begin{itemize}}
\newcommand\ei{\end{itemize}}
\newcommand\ben{\begin{enumerate}}
\newcommand\bena{\begin{enumerate}[(a)]}
\newcommand\een{\end{enumerate}}
\newcommand\bp{\begin{proof}}
\newcommand\ep{\end{proof}}
\newcommand{\R}{\ensuremath{\mathbb{R}}}
\newcommand{\N}{\mathbb{N}}
\renewcommand\Re{\operatorname{Re}}
\newcommand\p{\partial}
\newcommand{\eps}{\varepsilon}
\newcommand{\n}{\nonumber}
\title{Local Rigidity of the Couette Flow \\ for the Stationary Triple-Deck Equations}
\author{Sameer Iyer\footnote{Department of Mathematics, University of California, Davis, Davis, CA 95616, USA, \url{sameer@math.ucdavis.edu}} \qquad Yasunori Maekawa\footnote{Department of Mathematics, Graduate School of Science, Kyoto University, Kyoto 606-8502, Japan, \url{maekawa.yasunori.3n@kyoto-u.ac.jp}}}
\begin{document}

\maketitle

\begin{abstract}  The Triple-Deck equations are a classical boundary layer model which describes the asymptotics of a viscous flow near the separation point, and the Couette flow is an exact stationary solution to the Triple-Deck equations. In this paper we prove the local rigidity of the Couette flow in the sense that there are no other stationary solutions near the Couette flow in a scale invariant space. This provides a stark contrast to the well-studied stationary Prandtl counterpart, and in particular offers a first result towards the rigidity question raised by R. E. Meyer in 1983.
\end{abstract}

\section{Introduction}

In this paper we consider the two-dimensional stationary Triple-Deck equations in $\R\times \R_+$ with $\R_+=(0,\infty)$:
\begin{subequations}\label{TD:1}
\begin{align}
& U\p_x U  + V\p_y U - \p_y^2 U + \p_x P =0,\quad (x,y)\in \R\times \R_+,\\
& \p_x U + \p_y V =0,\quad (x,y)\in \R\times \R_+,\\
& \p_y P =0,\quad  (x,y)\in \R\times \R_+,
\end{align}
\end{subequations}
which are supplemented with the boundary conditions
\begin{subequations} \label{TD:2}
\begin{align}
& [U, V] |_{y = 0} = 0, \quad x\in \R\\
&\lim_{y \rightarrow \infty} (U-y) = A, \quad x\in \R, \qquad \lim_{x \rightarrow \pm\infty} (U - y) = 0, \quad y\in \R_+.
\end{align}
\end{subequations}
Here $[U,V]$ and $P$ are respectively the velocity field and the pressure field of the fluid. The third equation of  \eqref{TD:1} implies that $P$ depends only on the horizontal variable $x$. The key coupling inherent to the Triple-Deck system is the relation that links $A=A(x)$ to the pressure (called the {\it pressure-displacement} relation):
\begin{align} \label{PDR}
P(x) = \frac{1}{\pi} {\rm p.v.} \int_{\mathbb{R}} \frac{\p_x A(x')}{x - x'} dx = |\p_x| A (x). 
\end{align}
Here the operator $|\p_x|$ is formally given by the Fourier mutiplier, i.e., $|\p_x| A = \mathcal{F}^{-1} [|\xi| \hat{A}]$, where $\hat{A}  = \hat{A} (\xi)$ is the Fourier transform of $A$ with respect to the $x$ variable. Then we obtain the system 
\begin{subequations}\label{TD:3}
\begin{align}
& U\p_x U  + V\p_y U - \p_y^2 U =-\p_x|\p_x| A,\quad (x,y)\in \R\times \R_+,\\
& \p_x U + \p_y V =0,\quad (x,y)\in \R\times \R_+,\\
&[U, V]|_{y = 0} = 0, x\in \R\\
&\lim_{y \rightarrow \infty} (U-y) = A, \quad x\in \R, \qquad \lim_{x \rightarrow \pm\infty} (U - y) = 0, \quad y\in \R_+.
\end{align}
\end{subequations}

\subsection{Overview of the Triple-Deck}

Due to the mismatch between the no-slip boundary condition for the Navier-Stokes equations and the no-penetration condition for the Euler equations, there are thin,  $O(\text{viscosity}^{\frac12})$, layers of rapid velocity change in which the flow transitions from the Eulerian behavior in the bulk. These regions are called boundary layers, and are where viscous effects become leading order even when the viscosity is very small. In general, the boundary layer models are derived to effectively describe a viscous flow near a physical boundary (one can think of $\{y = 0\}$ as the physical boundary). We point the reader to Prandtl's original work \cite{Prandtl}, as well as the classical treatise by Schlichting-Gersten \cite{SchGer} for a detailed derivation of the stationary boundary layer theory. 

The most classical boundary layer model is the Prandtl system, which reads as follows: 
\begin{align} \label{stPR:1}
U \p_x U + V \p_y U - \p_y^2 U + \p_x P = 0, \qquad (x, y) \in (0, L) \times \mathbb{R}_+,  \\ \label{stPR:2}
\p_x U + \p_y V = 0, \qquad (x, y) \in (0, L) \times \mathbb{R}_+,\\ \label{stPR:3}
\p_y P = 0, \qquad (x, y) \in (0, L) \times \mathbb{R}_+. 
\end{align} 
Above, $L > 0$ is a parameter that can be taken to be $\infty$. Physically, the boundary $\{x = 0\}$ represents an ``inflow" boundary where one may prescribe the velocity of the flow. The boundary $\{y = 0\}$ is the physical boundary where one prescribes the no-slip condition, and $\{y = \infty\}$ represents the Eulerian regime where one demands the matching condition to the Euler trace: 
\begin{align} \label{stPR:4}
&U|_{x = 0} = U_{\text{INIT}}(y), \\ \label{stPR:5}
&[U, V]|_{y = 0} = 0, \\ \label{stPR:6}
&\lim_{y \rightarrow \infty} U = u_E(x). 
\end{align}
Upon evaluation of \eqref{stPR:1} at $y = \infty$, we obtain the classical Bernoulli's law which relates the pressure to the Euler trace: 
\begin{align} \label{bernoulli:1}
\p_x P(x) = - u_E(x) u_E'(x). 
\end{align}

Formally, the equation \eqref{stPR:1} exhibits a parabolic scaling $U \p_x \sim \p_y^2$. This scaling is consistent with the role of $\{x =0\}$ as an inflow boundary. It is for this reason that the initial datum is provided in \eqref{stPR:4}, and the stationary Prandtl equations are typically regarded as an evolution equation in $x$. The Cauchy problem \eqref{stPR:1} -- \eqref{bernoulli:1}, both local and global, was initiated by Oleinik, \cite{Oleinik}, and has been intensively studied in the last several years (e.g. \cite{Serrin,IyerBlasius,GuoIyer,WZ,YZ,IMRev} as just a few examples).

One of the most important phenomena in boundary layer theory is that of flow separation. In the presence of an adverse pressure gradient $(\p_x p_E(x) > 0)$, the laminar component of the flow may detach from the boundary at a particular point, $X_\ast$, leaving in its wake a turbulent regime. The formation of separation has been proven rigorously for the stationary Prandtl system in the works of Shirota and Matsui \cite{SM1,SM2}, Dalibard-Masmoudi \cite{DM}, and Shen-Wang-Zhang \cite{SWZ}. 

It turns out that near the separation point, $X_\ast$, due to large tangential gradients, the formal asymptotics that allow one to derive the Prandtl system from the Navier-Stokes equations are no longer justifiable. Therefore, it is reasonable to believe that the Prandtl system has to be modified in order to account for these large tangential gradients near the separation point in order to accurately govern the leading order dynamics of the Navier-Stokes flow near the separation point. For this reason, fluid dynamicists proposed a rescaling of the tangential variable, $x$, to $O(\text{viscosity}^{\frac38})$ near the separation point. In so doing, the Triple-Deck equations, \eqref{TD:1} -- \eqref{TD:2}, were derived as a refinement to the Prandtl system. 

We point the reader towards the article of Meyer \cite{Meyer}, which provides a detailed historical overview of the Triple-Deck system. The reader may consult Lagr{\'e}e \cite{Lagree} for an in-depth derivation of the Triple-Deck equations together with a justification for the many rescalings involved in these derivations. Recently, there have been some rigorous mathematical results on the dynamical Triple-Deck system by the first author and Vicol \cite{IV}, Dietert and G{\'e}rard-Varet \cite{DGV}, and the authors and G{\'e}rard-Varet \cite{GVIM}. However, unlike the stationary Prandtl system, the stationary Triple-Deck has thus far not been studied from a rigorous mathematical standpoint to the authors' knowledge. 

Comparing the Triple-Deck equations with the classical Prandtl equations, one finds the following key differences: 
\begin{itemize}
\item[(1)] The pressure relation through the Bernoulli law, \eqref{bernoulli:1}, is ``passive": from the point of view of \eqref{stPR:1} -- \eqref{bernoulli:1}, the quantity $P(x)$ is determined externally by the given Euler flow. In contrast, the Pressure-Displacement relation \eqref{PDR} for the Triple-Deck system is determined ``actively": the quantity $A$ itself is an unknown in the boundary layer. Moreover, by using $\p_x P(x) = \p_x |\p_x|A = \p_x |\p_x| U|_{y = \infty}$, this operation is a relatively singular operation on the unknown $U$.   

\item[(2)] The Prandtl system is set on the tangential interval $x \in (0, L)$, and is importantly supplemented with the Cauchy data, $U_{INIT}(y)$. This Cauchy data is the mechanism by which nontrivial solutions are generated. On the other hand, the stationary Triple-Deck system is set on the tangential interval $x \in \mathbb{R}$ and \textit{it does not have Cauchy data}, and instead, only the condition at $x=-\infty$ is imposed  as $\lim_{x \rightarrow - \infty} U = y$.

Physically, this distinction arises from the rescaling near the separation point, in which the original tangential variable before the separation point get pushed to $-\infty$ under the rescaling. Mathematically, this implies that the stationary Triple-Deck system cannot truly be thought of as an evolution equation, and it is unclear what the mechanism is to generate nontrivial solutions.
\end{itemize}

Point (1) is also present in the dynamical setting and represents the main difficulties of dynamical Triple-Deck compared to the unsteady Prandtl equations (and has thus been encountered in the previously mentioned works \cite{IV,DGV,GVIM}). On the other hand, point (2) above is unique to the stationary setting, and has not appeared in the previous works. In fact, this observation inspires our point-of-view in this article, which we now describe. It is clear that the Couette flow
\begin{align}\label{couette}
[U, V, A] = (y, 0, 0)
\end{align}
is an exact stationary solution to the system \eqref{TD:3}. Given point (2) above, the precise formulation of our question is: do there exist solutions to \eqref{TD:3} which are distinct from the Couette flow? Interestingly, this type of question has been raised even by the fluid-dynamicists who developed the Triple-Deck model. We give the following statement from Meyer's article \cite{Meyer}:
\begin{quote}
One of the main distinctions is that Prandtl's equations are in inhomogeneous system forced by a known external flow, but the [stationary Triple-Deck Equations] are a \textit{homogeneous} system and \textbf{may therefore be suspected of admitting nontrivial solutions only for particular values of an as yet unidentified eigenparameter.}
\end{quote}

The aim of this paper is to show the local rigidity of the Couette flow. That is, we will show the Liouville-type theorem such that, in a suitable functional framework, there exist no other solutions of \eqref{TD:3} around the Couette flow \eqref{couette}. We now describe our main result in more precise terms. 

\subsection{Main Result}

Let us consider perturbations to the Couette flow of the form 
\begin{align}
[U,V,A] = [y+u,v, A].
\end{align}
Then $[u,v,A]$ obeys the perturbed system 
\begin{subequations}\label{ptd:1}
\begin{align}
& y \p_x u + v -\p^2_y u = -\p_x|\p_x| A- u\p_x u - v\p_y u,\quad (x,y)\in \R\times \R_+,\\
& \p_x u + \p_y v =0,\quad (x,y)\in \R\times \R_+,\\
&[u, v]|_{y = 0} = 0, \quad x\in \R,\\
&\lim_{y \rightarrow \infty} u = A, \quad x\in \R,\qquad \lim_{x \rightarrow \pm\infty} u  = 0, \quad y\in \R_+.
\end{align}
\end{subequations}
For the study of the system \eqref{ptd:1} it is useful to introduce the {\it vorticity} field $\omega = \p_y u$, i.e., $u$ and $\omega$ is related as  
\begin{align}
u (x,y) =I_y [\omega] (x,y) := \int_0^y \omega (x,z) dz.
\end{align}
Then, by imposing the enough decay of $\omega$, the condition at $y=\infty$ in the fourth equation of \eqref{ptd:1} is written as the integral condition 
\begin{align}
I_\infty [\omega] =A,\quad x\in \R.
\end{align}
On the other hand, taking the boundary trace in the first equation of \eqref{ptd:1}, we find that $\omega$ is subject to the Neumann type boundary condition
\begin{align}\label{bc.vor}
\p_y \omega|_{y=0} =\p_x |\p_x| A,\quad x\in \R.
\end{align}
Hence, taking $\p_y$ in the first equation of \eqref{ptd:1}, we obtain the system for $\omega$ such as
\begin{subequations}\label{ptd:2} 
\begin{align}
& y \p_x \omega -\p^2_y \omega = - u\p_x \omega - v\p_y \omega ,\quad (x,y)\in \R\times \R_+,\\
& [u,v] = \big [I_y [\omega],-\p_x I_y [u]\big],\quad (x,y)\in \R\times \R_+,\\
& \p_y\omega|_{y=0} = \p_x|\p_x| A, ~~ I_\infty[\omega] = A,\quad  x\in \R.
\end{align}
\end{subequations}
To state the result we first fix the definition of solutions to \eqref{ptd:2} in this paper. Let us denote by $BC(\R)$ the space of all bounded and continuous functions in $\R$, and by $C_0^\infty (\overline{\R^2_+})$ the set of all smooth and compactly supported functions on $\overline{\R^2_+}$.
Let $\langle \cdot,\cdot\rangle_{L^2(\Omega)}$ be the inner product of $L^2(\Omega)$ for a domain $\Omega$. For $Q\in BC(\R)$ with $\p_x Q\in L^3 (\R)\cap L^2 (\R)$ we set $X_Q$ as the Banach space defined by 
\begin{align}\label{df.X_Q}
\begin{split}
X_Q & = \overline{C_0^\infty (\overline{\R^2_+})}^{\|\cdot \|_{X_Q}},\\
\| f\|_{X_Q} & = \| f\|_{L^\infty_x (\R; L^1_y(\R_+))} +  \|\p_y f\|_{L^6_x(\R; L^2_y(\R_+))} + \|\p_y^2f\|_{L^2(\R^2_+)} + \|y (\p_xf - \p_x Q\, \p_yf)\|_{L^2(\R^2_+)}.
\end{split}
\end{align}

\begin{remark} (i) One can check that $X_Q$ defined by \eqref{df.X_Q} is a Banach space. In particular, if $\{f_n\}\subset X_Q$ is a Cauchy sequence with respect to the norm $\|\cdot\|_{X_Q}$, then it converges in $X_Q$ and the limit $f$ satisfies $y\p_xf, \p_xQ\, \p_y f\in L^2_{loc}(\overline{\R^2_+})$ and $y (\p_x f-\p_x Q\, \p_y f)\in L^2(\R^2_+)$.

\noindent (ii) When $Q=0$ one can show that $|\p_x|^\frac13 \p_y f\in L^2(\R^2_+)$ for any $f\in X_Q$, which is compatible with $\p_y f \in L^6_x (\R; L^2_y (\R_+))$ by the Sobolev embedding inequality.
\end{remark}

\begin{definition}\label{def.sol} We say that $[\omega,A]$ is an $L^2$ strong solution to \eqref{ptd:2} if the following conditions are satisfied.

\noindent {\rm (i)} $A\in BC (\R)$, $\p_xA\in H^\frac76 (\R)$, and $\omega\in X_{A}$. 

\noindent {\rm (ii)} The couple $[\omega,A]$ satisfies 
\begin{subequations}
\begin{align}
&\begin{split}
\langle y\p_x\omega, \varphi\rangle_{L^2(\R^2_+)} + \langle \p_y\omega,\p_y\varphi\rangle_{L^2(\R^2_+)} & - \langle |\p_x| A, \p_x \varphi (\cdot,0)\rangle_{L^2(\R)} + \langle u\p_x \omega+ v\p_y \omega, \varphi \rangle_{L^2(\R^2_+)} =0\\
& {\rm for ~any} ~~\varphi \in C_0^\infty (\overline{\R^2_+}),
\end{split}\\
& \quad I_\infty[\omega] =A,\quad x\in \R,
\end{align}
\end{subequations}
where $u=I_y[\omega]$ and $v=-\p_xI_y[u]$.
\end{definition}

\begin{remark} (i) The condition $\omega\in L^\infty_x (\R; L^1_y(\R_+))$ in $X_A$ is a natural requirement for the term $I_\infty[\omega]$ to be well-defined.

\noindent (ii) The boundary condition is taken into account in the sense of distributions.

\noindent (iii)  The term $\langle u\p_x \omega, \varphi\rangle_{L^2(\R^2_+)}$ is well-defined for $\omega\in X_A$, for 
\begin{align*}
| \langle u\p_x \omega, \varphi\rangle_{L^2(\R^2_+)} | \leq \|y\p_x \omega\|_{L^2} \| y^{-1}u\varphi\|_{L^2} & \leq C \| y\p_x\omega\|_{L^2} \| \p_y (u \varphi) \|_{L^2}
\end{align*}
by the Hardy inequality, and 
\begin{align*}
\| \p_y (u\varphi)\|_{L^2} \leq \| \omega \varphi \|_{L^2} + \| u\p_y \varphi \|_{L^2}. 
\end{align*}
Since $\|\omega (x,\cdot)\|_{L^\infty_y}^2 \leq 2\| \p_y \omega (x,\cdot)\|_{L^\infty_y} \| \omega (x,\cdot) \|_{L^1_y}$ and $\| \p_y \omega (x,\cdot)\|_{L^\infty_y}^2 \leq 2 \| \p_y^2 \omega (x,\cdot) \|_{L_y^2} \| \p_y \omega (x,\cdot) \|_{L^2_y}$, we have $\p_y\omega \in L^3_x (\R; L^\infty_y(\R_+))$ and $\omega \in L^6_x (\R; L^\infty_y (\R_+))$ with the estimate
\begin{align*}
\| \omega \|_{L^6_xL^\infty_y} \leq C \| \p_y^2 \omega\|_{L^2}^\frac14 \| \p_y\omega \|_{L^6_xL^2_y}^\frac14 \| \omega \|_{L^\infty_xL^1_y}^\frac12.
\end{align*}
We also have $\|u\|_{L^\infty} \leq \|\omega\|_{L^\infty_xL^1_y}$, and hence, 
\begin{align*}
\| \p_y (u\varphi) \|_{L^2} \leq C \| \p_y^2 \omega\|_{L^2}^\frac14 \| \p_y\omega \|_{L^6_xL^2_y}^\frac14 \| \omega \|_{L^\infty_xL^1_y}^\frac12 \| \varphi \|_{L^3_xL^2_y} + \| \omega \|_{L^\infty_xL^1_y} \| \p_y\varphi \|_{L^2}.
\end{align*}

\noindent (iv) The term $\langle v\p_y \omega, \varphi\rangle_{L^2(\R^2_+)}$ is well-defined for $\omega\in X_A$. Indeed, since $I_y[u] = y A - I_y [I_\cdot^\infty [\omega]]$ with 
\begin{align*}
I_y^\infty[\omega] (x,y) = \int_y^\infty \omega (x,z)\,  dz,
\end{align*}
we can verify the identity  
\begin{align}\label{eq.v}
v = -\p_x I_y[u] = - y \p_x A +  I_y[I_{\cdot}^\infty \big [\p_x \omega -\p_x A\, \p_y \omega] \big] - \p_x A\, I_y [\omega].
\end{align}
Then we have 
\begin{align*}
|v(x,y)|\leq (y + \|\omega\|_{L^\infty_xL^1_y})  |\p_x A(x)| + C y^\frac12 \| z \big ( \p_x\omega - \p_x A\, \p_z \omega\big )(x,\cdot)\|_{L^2_z (\R_+)},
\end{align*}
where the estimate $|I_y^\infty[\p_x\omega -\p_x A\,\p_z\omega] (x)| \leq C y^{-\frac12} \|z(\p_x\omega -\p_x A\, \p_z \omega )(x,\cdot)\|_{L^2_z(\R_+)}$ is used. Thus, 
\begin{align*}
& |\langle v\p_y \omega, \varphi\rangle_{L^2(\R^2_+)}| \\
&\leq \|\p_y \omega\|_{L^6_xL^2_y} \| v \varphi\|_{L^\frac65_xL^2_y} \\
& \leq C  \| \p_y \omega\|_{L^6_xL^2_y} \big ( (1+\|\omega\|_{L^\infty_xL^1_y}) \|\p_xA\|_{L^2_x} + \|y(\p_x\omega - \p_xA\, \p_y \omega)\|_{L^2_{x,y}} \big ) \| (y+1) \p_y \varphi \|_{L^3_x L^\infty_y}.
\end{align*}
The above calculation is easily justified for $\omega\in C_0^\infty (\overline{\R^2_+})$, and then it is extended to each $\omega\in X_{A}$ by the density argument.

\noindent (v) The condition $\p_x A\in H^\frac76(\R)$ is related to the optimal regularity of the boundary-value problem
\begin{align}\label{blinear:1}
y\p_x w -\p_y^2 w=0,~(x,y)\in \R^2_+,\qquad \p_y w|_{y=0} = g, ~x\in \R.
\end{align}
By applying the Fourier transform in $x$, the solution to \eqref{blinear:1} decaying at infinity is expressed in terms of the Airy function. From this solution formula and the Plancherel theorem, one finds that the requirement $\p_y^2 w\in L^2(\R^2_+)$ is equivalent with $g\in \dot{H}^\frac16 (\R)$. 
\end{remark}

The result of this paper is stated as follows.

\begin{theorem}\label{main.thm:1} There exists $\delta>0$ such that the following statement holds. Let $[\omega,A]$ be an $L^2$ strong solution to \eqref{ptd:2}. If in addition $\omega \in L^\infty (\R^2_+)$, $y( \p_x \omega - \p_x A\, \p_y \omega )\in L^{\frac{12}{5},\infty}_x(\R; L^{\frac43,\infty}_y(\R_+))$, and   
\begin{align}\label{est.thm:1:1}
\begin{split}
&  \| \omega\|_{L^\infty_x (\R; L^1_y (\R_+))} \| \p_x A\|_{L^3(\R)} +  \|\omega\|_{L^\infty(\R^2_+)} \\
& \quad + \|\p_y \omega\|_{L^6_x(\R; L^2_y(\R_+))} + \| y( \p_x \omega - \p_x A\, \p_y \omega )\|_{L^{\frac{12}{5},\infty}_x(\R; L^{\frac43,\infty}_y(\R_+))}  \leq \delta,
\end{split}
\end{align} 
then $[\omega, A]=[0,0]$.
\end{theorem}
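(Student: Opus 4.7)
The plan is to implement the standard Liouville template: establish a coercive linear a priori estimate for the Airy-type operator $\omega\mapsto y\p_x\omega - \p_y^2\omega$ subject to the nonlocal boundary condition $\p_y\omega|_{y=0} = \p_x|\p_x| I_\infty[\omega]$ in a norm adapted to $X_A$; bound the nonlinear pairings $\langle u\p_x\omega + v\p_y\omega,\cdot\rangle$ by a small multiple of the same norm using the smallness hypothesis \eqref{est.thm:1:1}; and conclude $[\omega,A]=[0,0]$ by absorption.

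First, to identify the correct test quantity and isolate the resonant piece of the velocity field, I would use the decomposition of $v$ from Remark~2(iv), $v = -y\p_xA + I_y[I^\infty_\cdot[\p_x\omega - \p_xA\,\p_y\omega]] - \p_xA\,u$, to merge the Airy transport $y\p_x\omega$ with the leading contribution $y\p_xA\,\p_y\omega$ coming from $-v\p_y\omega$. This produces the ``good unknown'' combination $yD\omega$ with $D := \p_x - \p_xA\,\p_y$, which is precisely the quantity whose $L^2$ norm appears in $X_A$. The equation would then be rewritten schematically as $yD\omega - \p_y^2\omega = \widetilde N[\omega,A]$, with $\widetilde N$ collecting the remaining quadratic terms.

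Next, for the linear a priori estimate, I would test the rewritten equation against $\omega$ to produce $\|\p_y\omega\|_{L^2}^2 + \int \p_x|\p_x|A\cdot \omega(x,0)\,dx$ on the left (the transport contribution being antisymmetric). Passing to the Fourier variable $\xi$ conjugate to $x$, the linear problem reduces to the Airy ODE $\p_y^2\hat\omega - i\xi y\hat\omega = -\widehat{\widetilde N}$ with Neumann datum $i\xi|\xi|\hat A(\xi)$ and integral constraint $\hat A(\xi) = \int_0^\infty\hat\omega(\xi,y)\,dy$; Airy asymptotics then yield a sharp trace identity relating $\hat\omega(\xi,0)$ to $\hat A(\xi)$, from which the boundary integral delivers coercivity of $\|\p_x A\|_{H^{7/6}}^2$ --- the Sobolev exponent matching Definition~\ref{def.sol} and Remark~3(v). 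I would then upgrade this base energy by testing additionally against $-\p_y^2\omega$ and against a weighted multiplier aligned with $D$, aiming for
\begin{align*}
\|\omega\|_{X_A}^2 + \|\p_x A\|_{H^{7/6}}^2 \lesssim |\langle\widetilde N,\text{test multipliers}\rangle|.
\end{align*}

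Finally, I would bound each nonlinear pairing on the right using the norms assumed small in \eqref{est.thm:1:1}. The term $\langle u\p_x\omega,\cdot\rangle$ is handled by the Hardy-based chain already displayed in Remark~2(iii), producing a factor $\|\omega\|_{L^\infty_xL^1_y}\|\p_x A\|_{L^3}$ against $\|\omega\|_{X_A}$; the remaining pieces of $\langle v\p_y\omega,\cdot\rangle$, after subtracting the $-y\p_x A\,\p_y\omega$ contribution absorbed into the good unknown, are controlled by $\|\omega\|_{L^\infty}$, $\|\p_y\omega\|_{L^6_xL^2_y}$, and --- through a weak-type H\"older inequality matched against the dual multipliers --- the weak Lebesgue norm $\|y(\p_x\omega - \p_x A\,\p_y\omega)\|_{L^{12/5,\infty}_x L^{4/3,\infty}_y}$. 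Combining all bounds yields
\begin{align*}
\|\omega\|_{X_A}^2 + \|\p_x A\|_{H^{7/6}}^2 \lesssim \delta\,\big(\|\omega\|_{X_A}^2 + \|\p_x A\|_{H^{7/6}}^2\big),
\end{align*}
so for $\delta$ small the left-hand side vanishes, and $I_\infty[\omega]=A$ then forces $A=0$ as well.

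The principal obstacle is the coercivity step: the boundary symbol $\p_x|\p_x|$ is of higher order than what testing against $\omega$ alone naturally sees, and extracting the full $H^{7/6}$ control on $\p_x A$ out of the boundary integral requires careful Fourier/Airy-symbol bookkeeping in $\xi$. Equally important is that the term $-y\p_x A\,\p_y\omega$ coming from $-v\p_y\omega$ is of the same order as the Airy transport, so the good-unknown reformulation above is not cosmetic --- without it this contribution is not perturbative and the absorption argument fails.
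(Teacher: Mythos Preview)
Your identification of the two central obstacles is accurate: the contribution $-y\p_xA\,\p_y\omega$ hidden in $v\p_y\omega$ is of the same order as the Airy transport and cannot be treated perturbatively, and control on $A$ must ultimately come from the interplay of the Neumann datum $\p_y\omega|_{y=0}=\p_x|\p_x|A$ with the integral constraint $A=I_\infty[\omega]$ via Airy-function analysis. However, the way you combine these ingredients has two genuine gaps.

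First, placing $yD\omega$ with $D=\p_x-\p_xA\,\p_y$ on the left is incompatible with the Fourier step you invoke immediately afterward: $D$ carries the variable coefficient $\p_xA(x)$, so the Fourier transform in $x$ produces a convolution in $\xi$, not the Airy ODE. The paper instead performs an actual change of dependent variable $w(x,y)=\omega(x,\Phi(x,y))$ with $\Phi(x,y)=y-\chi_R(y)A(x)$, which straightens the characteristics and leaves the genuinely constant-coefficient operator $y\p_x-\p_y^2$ acting on $w$, with a right-hand side that is now perturbative in the scale-invariant norm. The cutoff $\chi_R$, whose scale is proportional to $R(1+\|A\|_\infty)$, and the subsequent balance between large $R$ and small $\delta$, are essential to the closure and are not captured by the good-unknown substitution alone.

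Second, the coercivity you claim for the boundary integral does not follow. Testing against $\omega$ produces $\int\p_x|\p_x|A\cdot\omega(\cdot,0)\,dx$, which couples $A$ to the \emph{unknown} trace $\omega(\cdot,0)$; there is no sign and no $H^{7/6}$ lower bound on $A$ unless one already expresses $\omega(\cdot,0)$ in terms of $A$, and any such relation is available only for the homogeneous Airy part of the solution, not for the full nonlinear $\omega$. The paper never tries to make this term coercive. After the change of variables it splits $w=w_b+w_{in}$, with $w_b$ the explicit Airy lift of the Neumann datum; the exact identity $I_\infty[\hat w_b](\xi)=\sigma(\xi)\,i\xi|\xi|^{1/3}\hat A(\xi)$ then converts the integral constraint into an elliptic equation $m(\xi)\hat A=\widehat{I_\infty[w_{in}]}+\text{(small)}$ with $|m(\xi)|^{-1}\leq C(1+|\xi|^{4/3})^{-1}$, yielding $\||\p_x|^{5/6}A\|_{H^{4/3}}\leq C\|w_{in}\|_Y$. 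The argument closes via the linear maximal-regularity estimate for $w_{in}$ (Proposition~\ref{lem.pre:1}) rather than an energy identity, producing $\|w_{in}\|_Y\leq C(R\|[\omega,A]\|_{sc}+R^{-1})\|w_{in}\|_Y$ and hence $w_{in}=0$ once $R$ is fixed large and then $\delta$ taken small.
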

 
In \eqref{est.thm:1:1} the space $L^{\frac43,\infty}_y (\R_+)$ is the weak $L^\frac43$ space on $\R_+$, and $L^{\frac{12}{5},\infty}_x(\R; Y)$ is the weak $L^\frac{12}{5}$ space on $\R$ with valued in the Banach space $Y$.
The smallness condition \eqref{est.thm:1:1} is related with the scaling of \eqref{ptd:2}. To be precise, let us introduce the scaling:
\begin{align}\label{scale}
\omega_\lambda (x,y) = \omega (\lambda^3 x,\lambda y), \quad u_\lambda (x,y) = \lambda^{-1} u (\lambda^3 x,\lambda y), \quad v_\lambda (x,y) = \lambda^2 v (\lambda^3 x,\lambda y), \quad \lambda>0.
\end{align}
This scaling is invariant in the sense that, if $[\omega,u,v]$ solves the first equation of \eqref{ptd:2}, then so does $[\omega_\lambda,u_\lambda,v_\lambda]$. The following norms are invariant under the scaling \eqref{scale}:
\begin{align*}
& \|\omega_\lambda\|_{L^\infty(\R^2_+)}=\|\omega\|_{L^\infty(\R^2_+)},\quad \| y\p_x\omega_\lambda \|_{L^2(\R^2_+)} = \| y\p_x\omega \|_{L^2(\R^2_+)},\quad \|\p_y^2 \omega_\lambda \|_{L^2(\R^2_+)}=\|\p_y^2\omega\|_{L^2(\R^2_+)},\\
& \|\p_y \omega_\lambda\|_{L^6_x(\R; L^2_y(\R_+))}  = \|\p_y \omega\|_{L^6_x(\R; L^2_y(\R_+))} ,\quad \|y\p_x\omega_\lambda\|_{L^{\frac{12}{5},\infty}_x(\R; L^{\frac43,\infty}_y(\R_+))} =\| y \p_x\omega \|_{L^{\frac{12}{5},\infty}_x(\R; L^{\frac43,\infty}_y(\R_+))}  .
\end{align*}
For $A$ the system \eqref{ptd:2} does not possess any invariant scaling. Indeed, when $\omega_\lambda (x,y) =\omega(\lambda^3 x,\lambda y)$ is introduced, the integral condition $I_\infty[\omega]=A$ leads to the scaling $\lambda\mapsto \lambda^{-1} A(\lambda^3 x)$, while the boundary condition yields the scaling $\lambda\mapsto \lambda^{-5} A (\lambda^3 x)$. This mismatch is related to the fact that, for $A$, the elliptic equation of the order $|\p_x|^\frac43$ is inherent in the system \eqref{ptd:2}. It should be noted that, if the scaling $A_\lambda (x) = \lambda^{-1} A(\lambda^3x)$ is introduced, we have the following invariance:
\begin{align*}
 \|\omega_\lambda\|_{L_x^\infty(\R;L^1_y(\R_+))} \|\p_x A_\lambda \|_{L^3(\R)} & =\|\omega\|_{L^\infty_x (\R; L^1_y (\R_+))} \|\p_x A \|_{L^3(\R)}, \\
 \| y \p_x A_\lambda \, \p_y \omega_\lambda \|_{L^{\frac{12}{5},\infty}_x(\R; L^{\frac43,\infty}_y(\R_+))} & = \| y \p_x A\, \p_y \omega \|_{L^{\frac{12}{5},\infty}_x(\R; L^{\frac43,\infty}_y(\R_+))}.
\end{align*} 
In this sense, the condition \eqref{est.thm:1:1} is described just in terms of the scale invariant quantities. As far as the authors know, Theorem \ref{main.thm:1} is the first result which reveals the scale critical quantities leading to the Liouville-type theorem for the stationary Triple-Deck system.
Our motivation for working in scale invariant spaces is in order to maximize the space we prove our result in and also to reveal a deep mathematical structure of the stationary Triple-Deck system. Indeed, assuming smallness in a stronger space would result in a easier result and essentially follow from the simpler linear estimates but with less understanding of the structure of the equations. On the other hand, to achieve the rigidity in a wider space than the scale-critical regime would be substantially harder, and in fact would immediately imply the \textit{global} rigidity of the Couette flow. So far, this is an important open problem.

In the proof of Theorem \ref{main.thm:1} the following observations are essential, and these are the key contributions of this paper for the mathematical analysis of the stationary Triple-Deck system.

\vspace{0.3cm}

(1) Transport effect from $y\p_x \omega +v\p_y \omega$: Apparently the leading term of the first equation of \eqref{ptd:2} is $y\p_x\omega -\p_y^2 \omega$, for we are imposing a smallness for the unknowns. However, this is not correct in the mathematical treatment, due to the presence of the term $-v\p_y\omega$, where $v$ behaves like $-y \p_xA$ for $y\gg 1$. Such a linear growth in $y$ prevent us treating the term $-v\p_y \omega$ as a perturbation for $y\gg 1$ and this term has to be taken as the principal part of the system in the rigorous analysis. To overcome this difficulty we introduce the nonlinear transformation $w(x,y) = \omega (x,\Phi (x,y))$ with $\Phi (x,y) = y-\chi_R (y) A(x)$, where $\chi_R (y) = \chi (\frac{y}{R\|\p_y \chi\|_{W^{1,\infty}} (1+\|A\|_{\infty})})$ for $R\geq 2$ and $\chi$ is a smooth cut-off such that $\chi(y)=0$ for $0\leq y\leq 1$ and $\chi(y)=1$ for $y\geq 2$; see Section \ref{sec.reduction}. This transformation is introduced by taking into account the balance $y\p_x \omega$ in the left-hand side of \eqref{ptd:2} and $-v\p_y \omega \sim y\p_x A \, \p_y \omega$ in the right-hand side of \eqref{ptd:2}, which implies that the stationary Triple-Deck system contains the nonlinear transport operator $\p_x  - c\, \p_y$ with $c=c(x) =\p_x A(x)$ in the regime $y\gg 1$. The above transformation is compatible with this transport effect. It should be also emphasized here that the choice of the size of the cut-off depending on the unknown $\|A\|_{\infty}$ is important to achieve the local rigidity in the scale critical framework as in Theorem \ref{main.thm:1}.

\vspace{0.3cm}

(2) $|\p_x|^\frac43$ ellipticity for $A$ and avoidance of possible derivative loss: It is clearly crucial to understand the role of the unkown $A$ in the analysis of the Triple-Deck system, which is the key difference from the classical Prandtl equations. In this paper we derive from the vorticity formulation \eqref{ptd:2} the elliptic equation of $A$ with the elliptic order $\frac43$; see Subsection \ref{subsec.A}. In this step the use of  \eqref{ptd:2} rather than \eqref{ptd:1} is important. Indeed, it seems to be difficult to find this structure for $A$ from the velocity formulation \eqref{ptd:1}. The elliptic smoothing for $A$ with the order $\frac43$ will be optimal in view of the formal scaling argument and is essential in the proof of Theorem \ref{main.thm:1}. It is also crucial to avoid possible derivative loss in $x$ for the system. Indeed, the term $v\p_y \omega$ can be a sorce of the derivative loss in $x$ due to the nonlocal relation $v=-\p_x I[u]$. Although the kinetic operator $y\p_x-\p_y^2$ of the system implies the smoothing in $x$ of the order $1$ for $y\geq 1$ and of the order $\frac23$ for $y>0$, this is not sufficient to handle the term $\p_x u$ near the boundary. 
The key point here is the smoothing of $A$ together with the formula \eqref{eq.v} for $v$, which enables us to escape from the derivative loss in $x$, resulting in Theorem \ref{main.thm:1} within a natural scale framework.

\

The rest of this paper is organized as follows. In Section \ref{sec.pre} we collect some basic functional inequalities that will be used throughout our analysis. In particular, the key estimates related with the kinetic operator $y\p_x -\p_y^2$ in the half space are stated. In Section \ref{sec.reduction} we introduce the transformation of the unknown $\omega$ and analyze the reduction system as well as the elliptic equation of $A$. The proof of Theorem \ref{main.thm:1} is then completed in Section \ref{sec.proof.main}. 

\section{Preliminaries}\label{sec.pre}

\subsection{Basic inequalities in the Lorentz space}

Let $\R^d$, $d\in \N$, be the $d$ dimensional Euclidean space. The following two propositions by O'Neil \cite{O'N} will be freely used in this paper. 

\begin{proposition}[Generalized H${\rm \ddot{o}}$lder inequality for product]\label{prop.pre.product} Let $\Omega \subset \R^d$ be a Lebesgue measurable set. Let $1< p,p_1,p_2<\infty$ and $1\leq q, q_1,q_2\leq \infty$ satisfy $\frac1p=\frac{1}{p_1}+\frac{1}{p_2}$ and $\frac1q=\frac{1}{q_1}+\frac{1}{q_2}$. Then, 
\begin{align}
\| fg\|_{L^{p,q}(\Omega)}\leq C \| f\|_{L^{p_1,q_1}(\Omega)} \| g\|_{L^{p_2,q_2}(\Omega)}.
\end{align}
\end{proposition}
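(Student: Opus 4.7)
The plan is to prove this by the classical rearrangement argument of O'Neil. Recall that the distribution function of $h$ on $\Omega$ is $\lambda_h(\sigma) = |\{x \in \Omega : |h(x)| > \sigma\}|$, the nonincreasing rearrangement is $h^*(t) = \inf\{\sigma > 0 : \lambda_h(\sigma) \leq t\}$, and the Lorentz functional can be written as $\|h\|_{L^{p,q}(\Omega)} = \|t^{1/p} h^*(t)\|_{L^q((0,\infty),\, dt/t)}$ (with the supremum convention for $q = \infty$). With this setup the proof reduces to two ingredients: a pointwise bound on the rearrangement of a product, and an ordinary H\"older inequality on the measure space $((0,\infty), dt/t)$.

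For the first ingredient I would establish the inequality
\[
(fg)^*(t_1 + t_2) \leq f^*(t_1)\, g^*(t_2), \qquad t_1, t_2 > 0.
\]
This comes from the elementary set inclusion $\{x : |f(x) g(x)| > f^*(t_1) g^*(t_2)\} \subset \{|f| > f^*(t_1)\} \cup \{|g| > g^*(t_2)\}$, whose right-hand side has measure at most $t_1 + t_2$ by right-continuity of the distribution functions. Specializing to $t_1 = t_2 = t/2$ yields the compact form $(fg)^*(t) \leq f^*(t/2)\, g^*(t/2)$.

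For the second ingredient I would exploit the exponent identity $1/p = 1/p_1 + 1/p_2$ to distribute the weight $t^{1/p}$ across the two factors:
\[
t^{1/p} (fg)^*(t) \leq 2^{1/p}\, \bigl[(t/2)^{1/p_1} f^*(t/2)\bigr]\, \bigl[(t/2)^{1/p_2} g^*(t/2)\bigr].
\]
Taking the $L^q((0,\infty),\, dt/t)$ norm, changing variables $s = t/2$ (which preserves the measure $dt/t = ds/s$), and applying the classical H\"older inequality on $L^q(ds/s)$ with exponents $q_1/q$ and $q_2/q$ (conjugate thanks to $1/q_1 + 1/q_2 = 1/q$, and both $\geq 1$ since the same identity forces $q \leq q_1, q_2$), one arrives at
\[
\|fg\|_{L^{p,q}(\Omega)} \leq 2^{1/p}\, \bigl\|s^{1/p_1} f^*(s)\bigr\|_{L^{q_1}(ds/s)}\, \bigl\|s^{1/p_2} g^*(s)\bigr\|_{L^{q_2}(ds/s)} = 2^{1/p}\, \|f\|_{L^{p_1,q_1}}\, \|g\|_{L^{p_2,q_2}},
\]
which is the announced bound with $C = 2^{1/p}$. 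The borderline cases with some $q_i = \infty$ (which then also force $q = \infty$ on that branch) are handled by replacing $L^q(ds/s)$ integrals with essential suprema; the same chain of inequalities goes through unchanged.

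The main subtlety, rather than a genuine obstacle, is the normalization: for $q > p$ the quantity $\|t^{1/p} f^*\|_{L^q(dt/t)}$ is only a quasi-norm, and one usually replaces $f^*$ by the Hardy average $f^{**}(t) = t^{-1}\int_0^t f^*(s)\, ds$ to obtain a genuine norm (this is precisely where the hypothesis $p > 1$ enters). Since the statement is asserted only up to a multiplicative constant $C$, and the two functionals are equivalent in the range $1 < p < \infty$, this replacement merely modifies the final value of $C$ and does not affect the argument. The only step with real content is therefore the rearrangement bound $(fg)^*(t_1+t_2) \leq f^*(t_1)\, g^*(t_2)$; everything else is a bookkeeping application of H\"older.
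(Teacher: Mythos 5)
Your proof is correct, but a comparison with the paper is not really possible here: the paper does not prove this proposition at all, it simply cites it to O'Neil (``The following two propositions by O'Neil \cite{O'N} will be freely used in this paper''). What you have supplied is a complete proof of the cited fact, and it is precisely the classical argument: the product rearrangement estimate $(fg)^*(t_1+t_2)\le f^*(t_1)\,g^*(t_2)$ (via the set inclusion plus right-continuity of the distribution function, which you correctly invoke to get $\lambda_f(f^*(t_1))\le t_1$), followed by splitting the weight $t^{1/p}=2^{1/p}(t/2)^{1/p_1}(t/2)^{1/p_2}$, a scaling-invariant change of variables in $dt/t$, and ordinary H\"older on $L^q(ds/s)$ with the conjugate pair $q_1/q,\,q_2/q$. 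Your remark that the restriction $p>1$ is only needed to equate the $f^*$- and $f^{**}$-based Lorentz functionals, and not for the chain of inequalities itself, is accurate and worth having made explicit.

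One small slip: the parenthetical ``(which then also force $q=\infty$ on that branch)'' in your treatment of the endpoint cases is false as stated. Taking $q_1=\infty$ and $q_2<\infty$ gives $q=q_2<\infty$; the only case with $q=\infty$ is $q_1=q_2=\infty$. This does not damage the argument, since the endpoint H\"older inequality $\|FG\|_{L^q}\le\|F\|_{L^\infty}\|G\|_{L^q}$ is exactly what you need and is trivially true, but the justifying remark should be corrected or simply dropped.
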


\begin{proposition}[Generalized Young's inqeuality for convolution]\label{prop.pre.convo} Let $1< p,p_1,p_2<\infty$ and $1\leq q, q_1,q_2\leq \infty$ satisfy $\frac1p=\frac{1}{p_1}+\frac{1}{p_2}-1$ and $\frac1q=\frac{1}{q_1}+\frac{1}{q_2}$. Then, 
\begin{align}
\| f*g\|_{L^{p,q}(\R^d)} \leq C\|f\|_{L^{p_1,q_1}(\R^d)} \| g\|_{L^{p_2,q_2} (\R^d)}.
\end{align}
\end{proposition}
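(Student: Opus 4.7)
The plan is to follow the classical approach of O'Neil \cite{O'N}, reducing the estimate entirely to the theory of non-increasing rearrangements. Denote by $f^*$ the non-increasing rearrangement of $|f|$ on $(0,\infty)$ and set $f^{**}(t):=\frac{1}{t}\int_0^t f^*(s)\,\mathrm{d}s$. For $1<p<\infty$ and $1\leq q\leq\infty$ one has the norm equivalence
\[
\|f\|_{L^{p,q}(\R^d)} \,\simeq\, \Big\|t^{\frac{1}{p}} f^{**}(t)\Big\|_{L^q((0,\infty);\,\mathrm{d}t/t)},
\]
together with the monotone inequality $f^*(t)\leq f^{**}(t)$. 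Since the left-hand side of the asserted estimate depends only on the distribution function of $f*g$, it suffices to bound $(f*g)^{**}(t)$ in terms of $f^{**}$ and $g^{**}$.

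The first and main step is to establish the pointwise inequality
\[
(f*g)^{**}(t) \,\leq\, t\, f^{**}(t)\, g^{**}(t) \,+\, \int_t^\infty f^*(s)\,g^*(s)\,\mathrm{d}s, \qquad t>0.
\]
I would prove this by decomposing $f=f_1+f_2$ and $g=g_1+g_2$ at level $t$, with $f_1$ the part of $f$ taking values exceeding $f^*(t)$ (and analogously for $g$), then expanding $f*g = f_1*g_1+f_1*g_2+f_2*g_1+f_2*g_2$. The three mixed terms, each involving a bounded factor, are controlled by Young's inequality $L^1*L^\infty\hookrightarrow L^\infty$, and after averaging up to level $t$ they produce the contribution $t\,f^{**}(t)g^{**}(t)$. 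The remaining $L^1*L^1$ piece $f_1*g_1$ is handled via the Hardy--Littlewood rearrangement inequality, which reduces its integral against any non-negative test function to a one-dimensional integral in the rearranged variables; careful bookkeeping of the truncation levels $f^*(t)$ and $g^*(t)$ then yields exactly the tail $\int_t^\infty f^*(s)g^*(s)\,\mathrm{d}s$.

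With the pointwise bound in hand, I would estimate the two contributions to $\|t^{1/p}(f*g)^{**}\|_{L^q(\mathrm{d}t/t)}$ separately. For the first, Hölder's inequality on $(0,\infty)$ with measure $\mathrm{d}t/t$ gives
\[
\Big\|t^{1+\frac{1}{p}} f^{**} g^{**}\Big\|_{L^q(\mathrm{d}t/t)} \,\leq\, \Big\|t^{\frac{1}{p_1}} f^{**}\Big\|_{L^{q_1}(\mathrm{d}t/t)} \,\Big\|t^{\frac{1}{p_2}} g^{**}\Big\|_{L^{q_2}(\mathrm{d}t/t)},
\]
where the exponent identities $1+\frac{1}{p}=\frac{1}{p_1}+\frac{1}{p_2}$ and $\frac{1}{q}=\frac{1}{q_1}+\frac{1}{q_2}$ are precisely the hypotheses. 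For the second piece, setting $G(t):=\int_t^\infty f^*g^*\,\mathrm{d}s$, I would apply the one-sided Hardy inequality
\[
\Big\|t^{1/p}\,G(t)\Big\|_{L^q(\mathrm{d}t/t)} \,\leq\, C\,\Big\|t^{1+1/p}\, f^*(t)\,g^*(t)\Big\|_{L^q(\mathrm{d}t/t)},
\]
valid since $1/p>0$, and then Hölder on the half-line. In both cases the resulting factors $\|t^{1/p_j} f^*\|_{L^{q_j}(\mathrm{d}t/t)}$ and $\|t^{1/p_j} g^*\|_{L^{q_j}(\mathrm{d}t/t)}$ are equivalent to the Lorentz norms $\|f\|_{L^{p_j,q_j}}$ and $\|g\|_{L^{p_j,q_j}}$; this equivalence between the $f^*$- and $f^{**}$-based quantities holds precisely because $p_1,p_2>1$.

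The main obstacle is the pointwise O'Neil bound: the low-low interaction $f_1*g_1$ is not simply the convolution of rearrangements, so one must invoke the Hardy--Littlewood inequality to compare against a product of rearranged factors, and the truncation levels must be tracked carefully so that the remainder integral starts at $t$ rather than $0$. Once this pointwise inequality is established, the remaining estimates are routine Hölder and Hardy computations on the half-line, with exponents dictated exactly by the stated hypotheses.
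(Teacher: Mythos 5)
The paper does not prove Proposition \ref{prop.pre.convo}; it is quoted from O'Neil's 1963 paper and used as a known fact, so there is no in-paper argument to compare against. Your proposal reconstructs O'Neil's classical argument: reduce to the pointwise inequality $(f*g)^{**}(t)\leq t f^{**}(t)g^{**}(t)+\int_t^\infty f^*(s)g^*(s)\,ds$, then estimate $\|t^{1/p}(f*g)^{**}\|_{L^q(dt/t)}$ via half-line H\"older on the first term (using $1+\tfrac1p=\tfrac1{p_1}+\tfrac1{p_2}$ and $\tfrac1q=\tfrac1{q_1}+\tfrac1{q_2}$) and the one-sided Hardy inequality on the second term (valid since $1/p>0$), finally converting between $f^*$- and $f^{**}$-based functionals using $p_1,p_2>1$. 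The exponent arithmetic and the Hardy/H\"older steps are all correct, and this is indeed the standard route.

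One concrete issue in your sketch of the pointwise bound: with $f_1$ the piece of $f$ whose values exceed $f^*(t)$, the support of $f_1$ has measure at most $t$, so $f_1^*$ vanishes on $(t,\infty)$, and likewise for $g_1^*$. The ``large--large'' piece $f_1*g_1$ therefore cannot produce a tail integral over $(t,\infty)$; that contribution must come from the bounded pieces $f_2,g_2$, whose rearrangements coincide with $f^*,g^*$ on $(t,\infty)$. Symmetrically, $f_2*g_2$ is not controlled by Young's $L^1*L^\infty\hookrightarrow L^\infty$ as you state, because neither $f_2$ nor $g_2$ need lie in $L^1$. So the assignment of the four pieces to the two terms of O'Neil's pointwise bound is backwards. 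This is a correctable bookkeeping slip rather than a structural flaw---the $L^1*L^\infty$ bound applies cleanly to the cross terms $f_1*g_2$ and $f_2*g_1$, and the remaining two pieces are handled with the Hardy--Littlewood rearrangement estimate and the truncation-level arithmetic---but as written, the decomposition step does not close, and you should redo the accounting before presenting this as a proof.
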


\subsection{Properties of the class $X_Q$}

\begin{lemma}\label{lem.pre:0}  Let $Q\in BC(\R)$ with $\p_xQ\in L^3(\R)\cap L^2(\R)$. If $f\in X_Q$, then 
\begin{align}\label{est.lem.pre:0}
\lim_{|x|\rightarrow \infty} \| f(x,\cdot)\|_{L^1_y(\R_+)}= 0. 
\end{align}
Conversely, if $f\in BC_x (\R; L^1_y (\R_+))$,  $\p_y f\in L^6_x (\R; L^2_y(\R_+))$,  $\p_y^2 f, \, y(\p_x f-\p_xQ \, \p_yf) \in L^2(\R^2_+)$, and \eqref{est.lem.pre:0} holds, then $f\in X_Q$.
\end{lemma}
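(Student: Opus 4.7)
For the first direction, the statement follows by a density argument. Suppose $f \in X_Q$ and choose $\{f_n\} \subset C_0^\infty(\overline{\R^2_+})$ with $\|f - f_n\|_{X_Q} \to 0$; in particular $\|f - f_n\|_{L^\infty_x(L^1_y)} \to 0$. Given $\varepsilon > 0$, pick $n$ with $\|f - f_n\|_{L^\infty_x(L^1_y)} < \varepsilon$; since $f_n$ has compact support, $f_n(x,\cdot) \equiv 0$ for $|x|$ outside a compact set, and hence $\|f(x,\cdot)\|_{L^1_y} < \varepsilon$ for all sufficiently large $|x|$, which is \eqref{est.lem.pre:0}.

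For the converse, given $f$ satisfying the stated assumptions and decay, the plan is to construct an approximating sequence in $C_0^\infty(\overline{\R^2_+})$ by cutoff followed by mollification. Fix $\chi \in C_c^\infty(\R)$ with $\chi \equiv 1$ on $[-1,1]$ and $\mathrm{supp}\,\chi \subset [-2,2]$, and set $\varphi_R(x,y) := \chi(x/R)\chi(y/R)$ and $f_R := \varphi_R f$. I would first show $\|f - f_R\|_{X_Q} \to 0$ as $R \to \infty$, treating the four seminorms separately. For the $L^\infty_x(L^1_y)$ seminorm, the map $x \mapsto f(x,\cdot)$ extends continuously to the one-point compactification of $\R$ with limit $0$, so its range is relatively compact in $L^1_y$; the Kolmogorov--Riesz criterion then yields uniform equiintegrability in $y$, i.e.\ $\sup_x \int_R^\infty |f(x,y)|\,dy \to 0$, which combined with decay in $|x|$ gives the convergence. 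The pieces $\|\p_y(f - f_R)\|_{L^6_x(L^2_y)}$ and $\|\p_y^2(f - f_R)\|_{L^2}$ follow from dominated convergence together with the $O(1/R)$ smallness of $\nabla\varphi_R$.

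The delicate step is $\|yT(f - f_R)\|_{L^2}$, where $Tf := \p_x f - \p_x Q\,\p_y f$. Expanding
\[
yT(f - \varphi_R f) = y(1-\varphi_R)\,Tf - y\,f\,T\varphi_R,
\]
the first piece vanishes in $L^2$ by dominated convergence using $yTf \in L^2$. On $\mathrm{supp}(\nabla\varphi_R)$ one has $y \leq 2R$ and $|\nabla\varphi_R| \leq C/R$, so $|y\,T\varphi_R| \leq C(1 + |\p_x Q(x)|)$ uniformly in $R$, and it remains to show
\[
\int\!\!\int_{\mathrm{supp}(\nabla\varphi_R)} (1 + |\p_x Q|^2)\,|f|^2\,dy\,dx \longrightarrow 0.
\]
The main tool is the interpolation
\[
\|f(x,\cdot)\|_{L^\infty_y} \leq C\|f(x,\cdot)\|_{L^1_y}^{1/2}\|\p_y f(x,\cdot)\|_{L^2_y}^{1/4}\|\p_y^2 f(x,\cdot)\|_{L^2_y}^{1/4},
\]
which, together with the hypotheses, gives $\|f\|_{L^\infty_y} \in L^6_x$. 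Bounding $|f|^2 \leq \|f(x,\cdot)\|_{L^\infty_y}\,|f|$ and splitting $\mathrm{supp}(\nabla\varphi_R)$ into the pieces $\{R \leq |x| \leq 2R,\, y \leq 2R\}$ and $\{|x| \leq 2R,\, R \leq y \leq 2R\}$, I would use the decay of $\|f(x,\cdot)\|_{L^1_y}$ at $|x|\to\infty$, the equiintegrability in $y$, and the fact that $\p_x Q \in L^2 \cap L^3 \subset L^{12/5}$, to establish the convergence. This is the main obstacle, since it requires a careful matching of the decay hypothesis on $\|f\|_{L^1_y}$ with the H\"older exponents coming from the interpolation.

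For the mollification stage, each $f_R$ has compact support in $\overline{\R^2_+}$; extending by even reflection across $\{y = 0\}$ and convolving with a standard mollifier $\rho_\epsilon$ on $\R^2$, then restricting to $\overline{\R^2_+}$, produces $g_{R,\epsilon} \in C_0^\infty(\overline{\R^2_+})$ for $\epsilon$ small. Convergence $g_{R,\epsilon} \to f_R$ in the three seminorms other than $\|yT\cdot\|_{L^2}$ is standard. For $\|yT(g_{R,\epsilon} - f_R)\|_{L^2}$, the one nonstandard ingredient is the Friedrichs commutator $[\p_x Q,\,\rho_\epsilon\ast](y\,\p_y f_R)$, whose $L^2$-norm vanishes as $\epsilon \to 0$ by the classical Friedrichs lemma (using $\p_x Q \in L^3$ together with the compact support and $L^2$-control of $y\,\p_y f_R$). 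A diagonal extraction $g_n := g_{R_n,\epsilon_n}$ with $R_n \to \infty$ and $\epsilon_n \to 0$ appropriately then completes the approximation, giving $f \in X_Q$.
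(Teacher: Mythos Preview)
Your argument for the forward direction is correct and coincides with the paper's.

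For the converse, however, the isotropic cutoff $\varphi_R(x,y)=\chi(x/R)\chi(y/R)$ does not close. The piece you single out as ``the main obstacle'' is in fact a genuine obstruction: on the strip $\{R\le|x|\le 2R,\ 0\le y\le 2R\}$ one has $|y\,\p_x\varphi_R|\le C$, and you are reduced to showing
\[
\int_{R\le|x|\le 2R}\|f(x,\cdot)\|_{L^2_y}^2\,dx
\ \le\ \int_{R\le|x|\le 2R}\|f(x,\cdot)\|_{L^\infty_y}\,\|f(x,\cdot)\|_{L^1_y}\,dx \longrightarrow 0.
\]
But with only $\|f(x,\cdot)\|_{L^\infty_y}\in L^6_x$ and the \emph{qualitative} decay $\|f(x,\cdot)\|_{L^1_y}\to 0$, every H\"older pairing produces a factor $R^{5/6}$ (or worse) from the length of the $x$--interval that the $o(1)$ tail cannot beat; the equiintegrability in $y$ does not help either, since splitting at height $M$ leaves a term $M\!\int_R^{2R}\|f\|_{L^\infty_y}^2\,dx\sim M R^{2/3}$. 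The hypotheses do not provide any quantitative decay rate, so this term need not vanish.

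The paper resolves this by an \emph{anisotropic} cutoff: $\psi_{1,n}(x)\psi_{2,m}(y)$ with the $y$--scale $m=m(n)=n^{5/12}$ much smaller than the $x$--scale $n$. Then $|y\,\p_x\psi_{1,n}\,\psi_{2,m}|\le Cm/n$ and, using $f\in L^{12}_xL^2_y$ (derived exactly as you do), one gets $\|y\,f\,\p_x\psi_{1,n}\,\psi_{2,m}\|_{L^2}\le C\,m\,n^{-1}n^{5/12}\|f\|_{L^{12}_xL^2_y}=Cn^{-1/6}\to 0$, with no appeal to the pointwise decay of $\|f\|_{L^1_y}$. The decoupled scales are also what make the $\p_y\psi_{2,m}$ and $\p_y^2\psi_{2,m}$ commutator terms small.

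A secondary issue: the even reflection across $\{y=0\}$ does not preserve $\p_y^2f\in L^2$ unless $\p_y f|_{y=0}=0$, so the mollified sequence will not converge in the $\|\p_y^2\cdot\|_{L^2}$ seminorm. The paper uses a higher-order extension $\tilde f(x,y)=\sum_{j=1}^{3}\lambda_j f(x,-jy)$ for $y<0$ with $\sum_j(-j)^k\lambda_j=1$, $k=0,1,2$, to keep $\p_y^2\tilde f\in L^2$.
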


The proof of Lemma \ref{lem.pre:0} is postponed to the appendix. In fact, the proof for the {\it converse} part in Lemma \ref{lem.pre:0} becomes a bit technical, due to the term $y \p_xQ \, \p_yf$.

\begin{proposition}\label{lem.pre:1} There exists $C>0$ such that
\begin{align}\label{est.lem.pre:1:1}
\| |\p_x|^\frac16 \gamma_{\p\R^2_+} \p_y f\|_{L^2(\R)} \leq C \big (\| \p_y^2 f\|_{L^2(\R^2_+)} + \| y\p_x f\|_{L^2 (\R^2_+)} \big )
\end{align}
and 
\begin{align}\label{est.lem.pre:1:2}
\|f\|_Y \leq C \big ( \| y\p_x f -\p_y^2 f\|_{L^2(\R^2_+)} + \| |\p_x|^\frac16 \gamma_{\p\R^2_+} \p_y f\|_{L^2(\R)} \big )
\end{align}
for any $ f \in X_Q$ with $Q\equiv 0$. 
Here  $\gamma_{\p\R^2_+}$ is the boundary trace operator and 
\begin{align}\label{est.lem.pre:1:3}
\begin{split}
\|f\|_Y & = \|\p_y^2 f\|_{L^2 (\R^2_+)} + \| y\p_xf \|_{L^2(\R^2_+)} + \| |\p_x|^\frac23 f\|_{L^2(\R^2_+)} +\| |\p_x|^\frac13 \p_y f\|_{L^2(\R^2_+)} \\
& \quad + \| |\p_x|^\frac{1}{2} y^\frac12 \p_y f\|_{L^2(\R^2_+)}  + \| |\p_x|^\frac{5}{12} y^\frac14 \p_y f\|_{L^2(\R^2_+)}.
\end{split}
\end{align}
\end{proposition}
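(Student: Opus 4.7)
The plan is to diagonalize the operator $y\p_x -\p_y^2$ by taking the partial Fourier transform in the $x$-variable and then applying the self-similar rescaling $\eta := |\xi|^{1/3} y$. By density I may assume $f \in C_0^\infty(\overline{\R^2_+})$. Setting $g := y\p_x f - \p_y^2 f$ and $h := \p_y f|_{y=0}$, and defining $\tilde f(\xi,\eta) := \hat f(\xi,|\xi|^{-1/3}\eta)$, $\tilde g(\xi,\eta) := |\xi|^{-2/3}\hat g(\xi,|\xi|^{-1/3}\eta)$, $\tilde h(\xi) := |\xi|^{-1/3}\hat h(\xi)$, the equation becomes the normalized complex Airy problem
\begin{equation*}
-\p_\eta^2 \tilde f + i\sign(\xi)\,\eta\, \tilde f = \tilde g(\xi,\eta),\qquad \p_\eta \tilde f(\xi,0) = \tilde h(\xi).
\end{equation*}
A direct power counting using Plancherel shows that every norm appearing in \eqref{est.lem.pre:1:1}--\eqref{est.lem.pre:1:3} equals $\bigl(\int_\R |\xi|\, Q(\xi)\, d\xi\bigr)^{1/2}$ for some weighted $L^2_\eta$-type quantity $Q(\xi)$ of $\tilde f$ or $\p_\eta \tilde f$; in particular $\|g\|_{L^2(\R^2_+)}^2 = \int_\R |\xi|\, \|\tilde g(\xi,\cdot)\|_{L^2_\eta}^2\, d\xi$ and $\||\p_x|^{1/6} h\|_{L^2(\R)}^2 = \int_\R |\xi|\, |\tilde h(\xi)|^2\, d\xi$. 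Both proposition statements thus reduce to uniform-in-$\xi$ one-dimensional estimates for the normalized Airy problem on $(0,\infty)$.

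For the trace estimate \eqref{est.lem.pre:1:1}, I would test the normalized equation against $\overline{\p_\eta \tilde f}$ and integrate by parts: the $-\p_\eta^2$ term produces the boundary contribution $\tfrac12 |\tilde h(\xi)|^2$, and the remaining volume integrals are controlled by Cauchy-Schwarz, yielding
\begin{equation*}
|\tilde h(\xi)|^2 \leq C\bigl(\|\eta \tilde f\|_{L^2_\eta}+\|\tilde g\|_{L^2_\eta}\bigr)\|\p_\eta \tilde f\|_{L^2_\eta}.
\end{equation*}
This is combined with $\|\tilde g\|_{L^2_\eta} \leq \|\p_\eta^2 \tilde f\|_{L^2_\eta}+\|\eta \tilde f\|_{L^2_\eta}$ (from the equation itself) and the interpolation inequality $\|\p_\eta \tilde f\|_{L^2_\eta}^2 \leq C(\|\p_\eta^2 \tilde f\|_{L^2_\eta}^2 + \|\eta \tilde f\|_{L^2_\eta}^2)$. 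The latter is obtained by testing against $\overline{\tilde f}$ and absorbing boundary contributions using the Gagliardo--Nirenberg trace $|\tilde f(0)|^2 \leq 2\|\tilde f\|_{L^2_\eta}\|\p_\eta \tilde f\|_{L^2_\eta}$ together with the elementary bound $\|\tilde f\|_{L^2_\eta}^2 \leq C(\|\p_\eta \tilde f\|_{L^2_\eta}^2+\|\eta \tilde f\|_{L^2_\eta}^2)$ proved by splitting the integral at $\eta=1$. Altogether $|\tilde h(\xi)|^2 \leq C(\|\p_\eta^2 \tilde f\|_{L^2_\eta}^2 + \|\eta \tilde f\|_{L^2_\eta}^2)$, and multiplying by $|\xi|$ and integrating over $\xi$ recovers \eqref{est.lem.pre:1:1}.

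For the main estimate \eqref{est.lem.pre:1:2}, the strategy is to control every weighted $L^2_\eta$-norm of $\tilde f, \p_\eta \tilde f$ appearing in $\|f\|_Y$ (after rescaling) by $C(\|\tilde g\|_{L^2_\eta}^2 + |\tilde h(\xi)|^2)$ uniformly in $\xi$. The four unweighted contributions to $\|f\|_Y$ correspond to $\|\p_\eta^2 \tilde f\|_{L^2_\eta}$, $\|\eta \tilde f\|_{L^2_\eta}$, $\|\tilde f\|_{L^2_\eta}$, and $\|\p_\eta \tilde f\|_{L^2_\eta}$, all controlled once the maximal-regularity bound $\|\p_\eta^2 \tilde f\|_{L^2_\eta}^2 + \|\eta \tilde f\|_{L^2_\eta}^2 \leq C(\|\tilde g\|_{L^2_\eta}^2 + |\tilde h(\xi)|^2)$ is available; this can be proved by pairing against the compound multiplier $\overline{\p_\eta^2 \tilde f + i\sign(\xi)\eta \tilde f}$ and absorbing boundary terms, or equivalently through the explicit Airy Green's function on $\R_+$ built from the decaying homogeneous solution $\mathrm{Ai}(e^{i\pi \sign(\xi)/6}\eta)$. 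The two mixed-weighted contributions correspond to $\|\eta^{1/2}\p_\eta \tilde f\|_{L^2_\eta}$ and $\|\eta^{1/4}\p_\eta \tilde f\|_{L^2_\eta}$. The former is extracted from the identity
\begin{equation*}
\|\eta^{1/2}\p_\eta \tilde f\|_{L^2_\eta}^2 = \tfrac12 |\tilde f(\xi,0)|^2 + \Re\int_0^\infty \eta\, \tilde f\, \overline{\tilde g}\, d\eta,
\end{equation*}
obtained by testing against $\eta\overline{\tilde f}$, taking real parts, and using the equation to eliminate $\p_\eta^2 \tilde f$; the latter follows from Cauchy-Schwarz, $\|\eta^{1/4}\p_\eta \tilde f\|_{L^2_\eta}^2 \leq \|\p_\eta \tilde f\|_{L^2_\eta}\|\eta^{1/2}\p_\eta \tilde f\|_{L^2_\eta}$.

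The principal obstacle is establishing the maximal-regularity bound $\|\p_\eta^2 \tilde f\|_{L^2_\eta}^2 + \|\eta \tilde f\|_{L^2_\eta}^2 \leq C(\|\tilde g\|_{L^2_\eta}^2 + |\tilde h(\xi)|^2)$ for the non-self-adjoint complex Airy operator, because a single multiplier does not close the estimate in one step: pairing against $\overline{\p_\eta^2 \tilde f}$ controls $\|\p_\eta^2 \tilde f\|$ in terms of $\|\eta \tilde f\|$ and vice versa, but a second, compound multiplier (or invocation of Airy function asymptotics at $0$ and $\infty$) is required to decouple the two. Once this bound is in hand uniformly in $\xi$, the remainder is routine Fourier--rescaling bookkeeping.
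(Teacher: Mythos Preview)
Your proposal is correct and reaches the same conclusion, but the organization differs from the paper's in a way worth noting. You first rescale $\eta=|\xi|^{1/3}y$ so that all six $Y$-norm pieces and the two right-hand sides acquire the common weight $|\xi|$, reducing the statement to $\xi$-independent estimates for the normalized complex Airy problem; you then propose to prove the key maximal-regularity bound $\|\p_\eta^2\tilde f\|^2+\|\eta\tilde f\|^2\le C(\|\tilde g\|^2+|\tilde h|^2)$ either by a compound multiplier or by the explicit Airy Green kernel. The paper instead stays in the variables $(\xi,y)$ and follows Bouchut's commutator method: it introduces $B^2=|\xi|^2\|y\hat f\|^2+|\xi|^{4/3}\|\hat f\|^2$, tests against $\xi y\hat f$ (imaginary part) and $|\xi|^{2/3}\hat f$ (real part), and crucially uses the identity $\|\hat f\|^2=-2\Re\langle\p_y\hat f,y\hat f\rangle$ coming from $[\p_y,y]=1$ to close the loop, avoiding any appeal to Airy asymptotics. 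Your rescaling makes the self-similarity manifest and is arguably cleaner bookkeeping; the paper's route is entirely ``soft'' and never needs the Green function.

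One small caution: the specific compound multiplier you wrote, $\overline{\p_\eta^2\tilde f+i\sign(\xi)\eta\tilde f}$, yields upon taking the real part the \emph{difference} $\|\eta\tilde f\|^2-\|\p_\eta^2\tilde f\|^2$, not the sum. What actually works is to expand $\|\tilde g\|^2=\|{-\p_\eta^2\tilde f+i\sigma\eta\tilde f}\|^2=\|\p_\eta^2\tilde f\|^2+\|\eta\tilde f\|^2-2\sigma\Im\langle\p_\eta\tilde f,\tilde f\rangle$ (after one integration by parts in the cross term), and then absorb the last term using $\|\tilde f\|^2\le 2\|\eta\tilde f\|\|\p_\eta\tilde f\|$ and your bound on $\|\p_\eta\tilde f\|$ from testing against $\overline{\tilde f}$. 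This is presumably what you intended (it is the pairing against $\overline{\tilde g}=\overline{-\p_\eta^2\tilde f+i\sigma\eta\tilde f}$), and once the sign is fixed the closure goes through exactly as you describe. Your Airy Green function alternative is of course also valid and corresponds to what the paper does later, in its Proposition~2.6, for the boundary-driven piece.
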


\begin{proof} It suffices to show \eqref{est.lem.pre:1:1} and \eqref{est.lem.pre:1:2} for $f\in C_0^\infty (\overline{\R^2_+})$ by the density argument. To prove \eqref{est.lem.pre:1:2}, we use the argument by Bouchut \cite{Bou2}, where the whole space case is discussed and the key observation is the commutator relation $[\p_y,y\p_x]=\p_x$. Set $h= y\p_x f -\p_y^2 f$, and we take the Fourier transform in $x$, which gives 
\begin{align}\label{proof.pre:1:1}
i\xi y \hat{f} (\xi,y) - \p_y^2\hat{f} (\xi,y) = \hat{h} (\xi,y).
\end{align}
In view of \eqref{est.lem.pre:1:3} it is useful to introduce the quantity $B\geq 0$ defined as 
\begin{align*}
B^2=B^2(\xi) = |\xi|^2\| y\hat{f}(\xi,\cdot) \|_{L^2(\R_+)}^2 + |\xi|^\frac43 \| \hat{f}(\xi,\cdot)\|_{L^2(\R_+)}^2.
\end{align*}
Taking the $L^2_y$ inner product with $\xi y\hat{f}$ in \eqref{proof.pre:1:1} implies 
\begin{align}\label{proof.pre:1:2}
i |\xi|^2 \| y\hat{f}\|_{L^2_y}^2 + \xi \| y^\frac12 \p_y \hat{f}\|_{L^2_y}^2 + \xi \langle \p_y \hat{f}, \hat{f}\rangle_{L^2_y}  = \langle \hat{h}, y\xi \hat{f}\rangle_{L^2_y}.
\end{align}
Hence, the imaginary part of \eqref{proof.pre:1:2} yields 
\begin{align}\label{proof.pre:1:3}
|\xi|^2 \| y\hat{f}\|_{L^2_y}^2 \leq |\xi| \|\p_y \hat{f}\|_{L^2_y} \| \hat{f}\|_{L^2_y} + \| \hat{h}\|_{L^2_y} |\xi| \| y\hat{f}\|_{L^2_y} \leq ( |\xi|^\frac13 \| \p_y\hat{f}\|_{L^2_y}  + \| \hat{h}\|_{L^2_y} )B.
\end{align}
Next we have 
\begin{align}\label{proof.pre:1:4}
|\xi|^\frac43 \|\hat{f}\|_{L^2_y}^2 = |\xi|^\frac43 \langle \hat{f}, [\p_y,y]\hat{f}\rangle_{L^2_y}  = - |\xi|^\frac43 \big ( \langle \p_y \hat{f}, y \hat{f}\rangle_{L^2_y} + \langle y \hat{f}, \p_y\hat{f}\rangle_{L^2_y} \big ) & \leq 2 |\xi|^\frac13 \| \p_y \hat{f}\|_{L^2_y} |\xi| \| y \hat{f}\|_{L^2_y}\nonumber \\
& \leq 2 |\xi|^\frac13 \|\p_y \hat{f} \|_{L^2_y} B.
\end{align}
Finally we take the $L^2_y$ inner product with $|\xi|^\frac23 \hat{f}$ in \eqref{proof.pre:1:1}, which yields 
\begin{align}\label{proof.pre:1:5}
i\xi |\xi|^\frac23 \|y^\frac12 \hat{f}\|_{L^2_y}^2 + |\xi|^\frac23 \| \p_y \hat{f}\|_{L^2_y}^2 = - |\xi|^\frac23 \p_y \hat{f}|_{y=0} \overline{\hat{f}}|_{y=0} + |\xi|^\frac23 \langle\hat{h},\hat{f}\rangle_{L^2_y}.
\end{align}
Then the real part of \eqref{proof.pre:1:5} gives 
\begin{align*}
|\xi|^\frac23 \| \p_y \hat{f}\|_{L^2_y}^2 & \leq  |\xi|^\frac23 |\p_y \hat{f}|_{y=0}| \, |{\hat{f}}|_{y=0}| +\|\hat{h}\|_{L^2_y} |\xi|^\frac23 \|\hat{f}\|_{L^2_y} \nonumber \\
& \leq  |\xi|^{\frac16} |\p_y \hat{f}|_{y=0}| (2 |\xi|^\frac13\| \p_y \hat{f}\|_{L^2_y} |\xi|^\frac23 \|\hat{f}\|_{L^2_y})^\frac12  +\|\hat{h}\|_{L^2_y} B\nonumber \\
& \leq C (|\xi|^\frac16 |\p_y \hat{f}|_{y=0}|)^\frac43 (|\xi|^\frac23 \| \hat{f}\|_{L^2_y})^\frac23 + \frac12 |\xi|^\frac23 \|\p_y\hat{f}\|_{L^2_y}^2   +\|\hat{h}\|_{L^2_y} B,
\end{align*}
that is,
\begin{align}\label{proof.pre:1:6}
|\xi|^\frac23 \| \p_y \hat{f}\|_{L^2_y}^2 & \leq C (|\xi|^\frac16 |\p_y \hat{f}|_{y=0}|)^\frac43 (|\xi|^\frac23 \| \hat{f}\|_{L^2_y})^\frac23  + C\|\hat{h}\|_{L^2_y} B \leq C (|\xi|^\frac16 |\p_y \hat{f}|_{y=0}|)^\frac43 B^\frac23 +C\|\hat{h}\|_{L^2_y} B.
\end{align}
Hence, \eqref{proof.pre:1:3}, \eqref{proof.pre:1:4}, and \eqref{proof.pre:1:6} yield
\begin{align*}
B^2 & \leq (3 |\xi|^\frac13 |\p_y\hat{f}\|_{L^2_y} + \|\hat{h}\|_{L^2_y} ) B \nonumber \leq C(|\xi|^\frac16 |\p_y \hat{f}|_{y=0}|)^\frac23  B^\frac43 + C \| \hat{h}\|_{L^2_y}^\frac12 B^\frac32 + C \|\hat{h}\|_{L^2_y} B, 
\end{align*}
which implies 
\begin{align}\label{proof.pre:1:7}
B^2 \leq C \big (|\xi|^\frac16 |\p_y \hat{f}|_{y=0}|)^2 + \|\hat{h}\|_{L^2_y}^2\big ).
\end{align}
Then, \eqref{proof.pre:1:6}, \eqref{proof.pre:1:1}, and the real part of \eqref{proof.pre:1:2} give
\begin{align}\label{proof.pre:1:8}
|\xi|^\frac23\|\p_y\hat{f}\|_{L^2_y}^2 + \|\p_y^2\hat{f}\|_{L^2}^2  + |\xi| \| y^\frac12 \hat{f}\|_{L^2_y}^2 \leq C \big ( (|\xi|^\frac16 |\p_y \hat{f}|_{y=0}|)^2 + \|\hat{h}\|_{L^2_y}^2\big ).
\end{align}
Hence, \eqref{est.lem.pre:1:2} follows from the Plancherel theorem, together with the interpolation inequality
\begin{align}\label{proof.pre:1:9}
|\xi|^\frac56 \| y^\frac14 \p_y\hat{f}\|_{L^2_y}^2 \leq |\xi|^\frac12 \|y^\frac12 \p_y \hat{f}\|_{L^2_y}\,  |\xi|^\frac13  \| \p_y \hat{f}\|_{L^2_y}.
\end{align}
Next we show \eqref{est.lem.pre:1:1}. We observe that 
\begin{align*}
|\xi|^\frac13 |\p_y \hat{f}|_{y=0}|^2 & \leq 2 |\xi|^\frac13 \|\p_y^2\hat{f}\|_{L^2_y}\| \p_y\hat{f}\|_{L^2_y}\\
& \leq C  \|\p_y^2\hat{f}\|_{L^2_y} \big ( |\xi|^\frac16 |\p_y \hat{f}|_{y=0}| + \|\hat{h}\|_{L^2_y}\big ),
\end{align*}
where \eqref{proof.pre:1:8} is used in the last line. This gives 
\begin{align}\label{proof.pre:1:10}
|\xi|^\frac13 |\p_y \hat{f}|_{y=0}|^2 \leq C \|\p_y^2\hat{f}\|_{L^2_y} \big ( \|\p_y^2\hat{f}\|_{L^2_y}  + \|\hat{h}\|_{L^2_y}\big )\leq C \big ( \|\p_y^2\hat{f}\|_{L^2_y}^2  + |\xi|^2 \| y\hat{f}\|_{L^2_y}^2\big ).
\end{align}
Thus, \eqref{est.lem.pre:1:1} follows from the Plancherel theorem. The proof is complete.
\end{proof}

\begin{corollary}\label{cor:lem.pre:1} There exists $C>0$ such that 
\begin{align}\label{est.cor.lem.pre:1}
\| \p_y f\|_{L^{6,2}_x (\R; L^2_y (\R_+))}  + \| y^\frac14 \p_y f\|_{L^{12,2}_x(\R; L^2(\R_+))} \leq C \big ( \| y\p_x f -\p_y^2 f\|_{L^2(\R^2_+)} + \| |\p_x|^\frac16 \gamma_{\p\R^2_+} \p_y f\|_{L^2(\R)} \big )
\end{align}
for any $f\in X_Q$ with $Q\equiv 0$. Here $L^{p,q}(\R; Y)$ with $1\leq p,q\leq \infty$ is the Lorentz space with valued in the Banach space $Y$.
\end{corollary}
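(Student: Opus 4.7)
The plan is to reduce both terms on the left-hand side of \eqref{est.cor.lem.pre:1} to $L^2$-based Sobolev norms already controlled by Proposition~\ref{lem.pre:1}, and then to upgrade these homogeneous Sobolev norms to Lorentz bounds by writing the inverse Riesz potential as a convolution and applying O'Neil's inequality in Proposition~\ref{prop.pre.convo}.

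By Proposition~\ref{lem.pre:1}, both $\| |\p_x|^\frac13 \p_y f\|_{L^2(\R^2_+)}$ and $\| |\p_x|^\frac{5}{12} y^\frac14 \p_y f\|_{L^2(\R^2_+)}$ are bounded by the right-hand side of \eqref{est.cor.lem.pre:1}, since they appear in the definition of $\|f\|_Y$. Thus, it suffices to establish the two vector-valued embeddings
\begin{equation*}
\|\p_y f\|_{L^{6,2}_x L^2_y} \leq C \| |\p_x|^\frac13 \p_y f\|_{L^2(\R^2_+)}, \qquad \|y^\frac14 \p_y f\|_{L^{12,2}_x L^2_y} \leq C \| |\p_x|^\frac{5}{12} y^\frac14 \p_y f\|_{L^2(\R^2_+)}.
\end{equation*}
Both follow from the same scheme: writing $|\p_x|^{-s}$ as convolution in $x$ against the Riesz kernel $c|x|^{s-1}$ and then reducing the vector-valued problem to a scalar convolution estimate via Minkowski's inequality in $y$.

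For the first embedding, set $g=|\p_x|^\frac13\p_y f$, so that $\p_y f(x,y) = c \int_\R |x-z|^{-2/3} g(z,y)\, dz$. Minkowski in $y$ yields the pointwise bound $\|\p_y f(x,\cdot)\|_{L^2_y} \leq c \int_\R |x-z|^{-2/3} \|g(z,\cdot)\|_{L^2_y}\, dz$. Since $|x|^{-2/3} \in L^{3/2,\infty}(\R)$, O'Neil's inequality (Proposition~\ref{prop.pre.convo}) with $\frac{1}{6}=\frac{2}{3}+\frac{1}{2}-1$ and Lorentz second indices $\infty$ and $2$ (giving $\frac{1}{q}=\frac{1}{2}$) produces $\|\p_y f\|_{L^{6,2}_x L^2_y} \leq C \|g\|_{L^2_x L^2_y}$, as desired. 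The second embedding is identical in structure applied to $y^\frac14 \p_y f$ with kernel $|x|^{-7/12} \in L^{12/7,\infty}(\R)$ and Hölder balance $\frac{1}{12}=\frac{7}{12}+\frac{1}{2}-1$.

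I do not anticipate any real obstacle, since the hard estimates are already in Proposition~\ref{lem.pre:1}; the sole subtlety is verifying that the one-dimensional Sobolev-to-Lorentz embedding lifts to $L^2_y$-valued functions, which is handled transparently by Minkowski's inequality before applying the scalar O'Neil estimate.
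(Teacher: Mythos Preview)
Your proposal is correct and follows essentially the same approach as the paper: both reduce to Proposition~\ref{lem.pre:1} and then invoke the Riesz-potential representation of $|\p_x|^{-\alpha}$ together with O'Neil's convolution inequality (Proposition~\ref{prop.pre.convo}) to obtain the embeddings $\dot{H}^{1/3}(\R)\hookrightarrow L^{6,2}(\R)$ and $\dot{H}^{5/12}(\R)\hookrightarrow L^{12,2}(\R)$. Your explicit use of Minkowski's inequality to pass from the scalar embedding to the $L^2_y$-valued setting is a helpful clarification that the paper leaves implicit.
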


\begin{proof} Since the operator $|\p_x|^{-\frac13}$ and $|\p_x|^{-\frac{5}{12}}$ are respectively expressed in terms of the convolution kernel $C|x|^{-1+\alpha}$ with $\alpha=\frac13$ and $\alpha=\frac{5}{12}$, we have the embedding $\dot{H}^\frac13(\R)\hookrightarrow L^{6,2}(\R)$ and $\dot{H}^\frac{5}{12}(\R)\hookrightarrow L^{12,2}(\R)$, as stated in O'Neil \cite{O'N} (indeed, these are the consequences of Proposition \ref{prop.pre.convo}). Hence, \eqref{est.cor.lem.pre:1} follows from \eqref{est.lem.pre:1:2} of Proposition \ref{lem.pre:1}. The proof is complete.
\end{proof}

\begin{proposition}\label{prop.pre:1} Let $g\in H^\frac16(\R)\cap \dot{H}^{-1}(\R)$. Then there exists a unique solution $w_b\in X_Q$ with $Q\equiv 0$ to the equation
\begin{align}\label{eq.b}
\langle y\p_xw_b, \varphi\rangle_{L^2(\R^2_+)} + \langle \p_yw_b,\p_y\varphi\rangle_{L^2(\R^2_+)} & + \langle g, \varphi (\cdot,0)\rangle_{L^2(\R)}  =0\quad  {\rm for ~any} ~~\varphi \in C_0^\infty (\overline{\R^2_+}).
\end{align}
In particular,  $\| w_b \|_Y\leq C \| |\p_x|^\frac16 g \|_{L^2(\R)}$ holds by Lemma \ref{lem.pre:1}. Moreover, $w_b$ satisfies 
\begin{align}
\| \p_y w_b \|_{L^\infty_y (\R_+; L^2_x(\R))} \leq C \| g\|_{L^2 (\R)}.
\end{align}
\end{proposition}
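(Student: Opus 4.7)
The plan is to construct $w_b$ explicitly by Fourier transform in $x$, leveraging the Airy-equation structure, and then to read off all required properties from the Fourier representation.

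First, I would take the Fourier transform in $x$ of \eqref{eq.b} to reduce the problem to the parameter-dependent ODE
\begin{equation*}
-\p_y^2 \hat{w}_b(\xi,y) + i\xi y\, \hat{w}_b(\xi,y) = 0,\qquad y>0,\qquad \p_y\hat{w}_b(\xi,0) = \hat{g}(\xi),\qquad \lim_{y\to\infty}\hat{w}_b(\xi,y) = 0.
\end{equation*}
The rescaling $z = (i\xi)^{1/3} y$, with the branch of $(i\xi)^{1/3}$ chosen so that $\Re (i\xi)^{1/3} > 0$, turns this into the Airy equation $\partial_z^2 \tilde w - z\tilde w = 0$. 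The unique decaying solution is the Airy function $\mathrm{Ai}$, and matching the Neumann condition yields the explicit formula
\begin{equation*}
\hat{w}_b(\xi,y) = \frac{\hat{g}(\xi)}{(i\xi)^{1/3}\,\mathrm{Ai}'(0)}\,\mathrm{Ai}\big((i\xi)^{1/3} y\big).
\end{equation*}

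Next I would show that this formula defines an element of $X_Q$ with $Q\equiv 0$. Since the rays $\arg z = \pm\pi/6$ lie inside the sector $|\arg z|<\pi/3$ where $\mathrm{Ai}$ and $\mathrm{Ai}'$ decay exponentially, $\hat{w}_b(\xi,\cdot)$ is rapidly decreasing in $y$ for each $\xi\neq 0$, and derivatives commute with the transform in the usual way. The hypothesis $g\in \dot H^{-1}(\R)$ is precisely what controls the $(i\xi)^{-1/3}$ singularity at $\xi=0$, while $g\in H^{1/6}(\R)$ controls the high-frequency behavior; together they are what allow me to verify the $L^\infty_xL^1_y$ component of the $X_Q$-norm and the approximation by $C_0^\infty(\overline{\R^2_+})$ functions. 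Having established $w_b\in X_Q$, the weak formulation \eqref{eq.b} follows from Plancherel and integration by parts in $y$, applied to the ODE.

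With $w_b$ in hand, the first estimate $\|w_b\|_Y\leq C\||\p_x|^{1/6}g\|_{L^2}$ is immediate from Proposition \ref{lem.pre:1}, since $y\p_x w_b-\p_y^2 w_b=0$ and $\gamma_{\p\R^2_+}\p_y w_b=g$. For the final bound, differentiating the explicit formula gives
\begin{equation*}
\p_y \hat{w}_b(\xi,y) = \frac{\hat{g}(\xi)}{\mathrm{Ai}'(0)}\,\mathrm{Ai}'\big((i\xi)^{1/3}y\big),
\end{equation*}
and since $\mathrm{Ai}'$ is globally bounded on the rays $\arg z=\pm\pi/6$, one has $\sup_{\xi\in\R,\,y\geq 0}|\mathrm{Ai}'((i\xi)^{1/3}y)|\leq C$. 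The Plancherel theorem in $x$ then yields $\|\p_y w_b(\cdot,y)\|_{L^2_x}^2\leq C\|g\|_{L^2}^2$ uniformly in $y\geq 0$.

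Uniqueness follows by applying the same Fourier analysis to any $w_b\in X_Q$ solving \eqref{eq.b} with $g=0$: the only decaying solution of the homogeneous ODE with vanishing Neumann trace is the zero function, so $\hat w_b(\xi,\cdot)=0$ for a.e.\ $\xi\neq 0$, hence $w_b=0$. The main technical obstacle I anticipate is the verification that the candidate $w_b$ actually belongs to $X_Q$; in particular, the $L^\infty_x L^1_y$-norm involves the $y$-integral of $\mathrm{Ai}((i\xi)^{1/3}y)/(i\xi)^{1/3}$, producing an inverse fractional derivative of $g$, and it is here that the role of the $\dot H^{-1}$ hypothesis — which was not needed for the $Y$- or $L^\infty_y L^2_x$-bounds — becomes essential.
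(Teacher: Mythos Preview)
Your proposal is essentially the same as the paper's proof: both construct $w_b$ explicitly via Fourier transform in $x$ and the Airy function, verify membership in $X_Q$ using Lemma~\ref{lem.pre:0}, and derive the $L^\infty_yL^2_x$ bound on $\p_y w_b$ from the boundedness of $\mathrm{Ai}'$ along the rays $\arg z=\pm\pi/6$. The only notable difference is uniqueness: the paper invokes the a~priori estimate \eqref{est.lem.pre:1:2} of Proposition~\ref{lem.pre:1} directly (for a difference one has $h=0$, $g=0$, hence $\|f\|_Y=0$), which is cleaner than your Fourier-side argument because it avoids having to justify that an arbitrary $X_Q$-solution of the weak formulation admits a pointwise Fourier reduction to the ODE.
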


\begin{proof} The uniqueness in $X_Q$ with $Q\equiv 0$ follows from \eqref{est.lem.pre:1:2}. To show the existence, we denote by ${\rm Ai} (z)$ the Airy function, which has the series expansion:
\begin{align*}
{\rm Ai} (z) = \sum_{n=0}^\infty \big [\frac{z^{3n}}{3^{2n+\frac23} \Gamma (n+\frac23)n!} - \frac{z^{3n+1}}{3^{2n+\frac43}\Gamma (n+\frac43)n!}\big ], \quad z\in \mathbb{C}.
\end{align*}
It is well known that ${\rm Ai}(z)$ satisfies
\begin{align*}
\frac{d^2}{dz^2} {\rm Ai} (z) - z {\rm Ai} (z) =0,\qquad {\rm Ai} (0)=\frac{1}{3^\frac23 \Gamma (\frac23)},\quad {\rm Ai}'(0) = - \frac{1}{3^\frac32 \Gamma (\frac43)}.
\end{align*}
Moreover, ${\rm Ai}(z)$ has the representation for $|\arg z|<\pi$ such as 
\begin{align*}
{\rm Ai} (z) = \frac{1}{2\pi} e^{-\frac23 z^\frac32} \int_0^\infty e^{-\sqrt{z} r} \cos (\frac13 r^\frac32) \, \frac{dr}{\sqrt{r}},
\end{align*}
from which the following asymptotic expansion is verifed:
\begin{align}\label{proof.prop.pre:1:1}
{\rm Ai} (z) \sim \frac{1}{2\pi z^\frac14} e^{-\frac23 z^\frac32} \sum_{n=0}^\infty \frac{\Gamma (3n+\frac12) (-1)^n}{3^{2n} (2n)! z^{\frac32 n}},\quad z\rightarrow \infty ~{\rm in} ~|\arg z|<\pi.
\end{align}
In particular, ${\rm Ai} (z)$ decays exponentially when $|\arg z|<\pi/3$. 
Now let us set 
\begin{align}\label{proof.prop.pre:1:2}
\hat{w}_b (\xi,y) = 
\begin{cases}
& \displaystyle \frac{{\rm Ai} (e^{\frac{\pi}{6}i} \xi^\frac13 y)}{{\rm Ai}'(0) e^{\frac{\pi}{6}i} \xi^\frac13} \hat{g}(\xi),\quad \xi>0,\\
& \mbox{}\\
& \displaystyle \frac{{\rm Ai} (e^{-\frac{\pi}{6}i} (-\xi)^\frac13 y)}{{\rm Ai}'(0) e^{-\frac{\pi}{6}i} (-\xi)^\frac13} \hat{g}(\xi),\quad \xi<0,
\end{cases}
\end{align}
which decays exponentially together with its derivatives as $y\rightarrow \infty$ by the definition and satisfies for each $\xi \in \R\setminus\{0\}$,
\begin{align*}
\p_y^2 \hat{w}_b =i \xi y \hat{\omega}_b,\quad y>0,\qquad \p_y \hat{w}_b|_{y=0} (\xi) = \hat{g}(\xi).
\end{align*}
Let $w_b$ be the inverse Fourier transform of $\hat{w}_b$ with respect to $\xi$.
We observe that 
\begin{align}\label{proof.prop.pre:1:3}
|\hat{w}_b (\xi,y)|\leq C |\xi|^{-\frac13} \min\{ |\xi|^{-\frac{1}{12}} y^{-\frac14}, |\xi|^{-\frac12} y^{-\frac32}\} \, |\hat{g} (\xi)|,
\end{align}
where we have used the fact that $|\xi^\frac13 y|^l {\rm Ai}(e^{\frac{\pi}{6}i} \xi^\frac13 y)$ for $\xi>0$ and $|(\xi)^\frac13 y|^l {\rm Ai}(e^{-\frac{\pi}{6}i} (-\xi)^\frac13 y)$ for $\xi<0$ are uniformly bounded for each $l\in \N\cup\{0\}$. Then we have from \eqref{proof.prop.pre:1:2} that 
\begin{align}\label{proof.prop.pre:1:4}
\int_\R |\hat{w}_b (\xi,y)| \, d\xi & \leq C \Big (\int_0^\infty \frac{\min\{ |\xi|^{-\frac{1}{6}} y^{-\frac12}, |\xi|^{-1} y^{-3}\} }{|\xi|^\frac23 (1+|\xi|^\frac13)} \frac{|\xi|^2}{1+|\xi|^2}\, d\xi \Big )^\frac12 \| \frac{(1+|\xi|^\frac16)(1+|\xi|)}{|\xi|} \hat{g} (\xi) \|_{L_\xi^2(\R)} \nonumber \\
& \leq \frac{C}{y^\frac14+y^\frac32} \| g\|_{H^\frac16(\R)\cap \dot{H}^{-1}(\R)}. 
\end{align}
This bound implies from the Riemann-Lebesgue lemma that, for each $y>0$, the function $w_b(x,y)$ is continuous in $x$ and $\displaystyle \lim_{|x|\rightarrow \infty} w_b (x,y)=0$. 
Moreover, the bound like \eqref{proof.prop.pre:1:4} and the Lebesgue convergence theorem yield
\begin{align*}
\lim_{x\rightarrow x'} \|w_b (x,\cdot)-w_b (x',\cdot)\|_{L^1_y(\R_+)} =0, \quad \lim_{|x|\rightarrow \infty} \| w_b (x,\cdot) \|_{L^1_y (\R_+)} =0.
\end{align*}
Hence, $w_b\in BC_x (\R; L^1_y (\R_+))$ and \eqref{est.lem.pre:0} holds.
It is easy to see from \eqref{proof.prop.pre:1:2} that $\|\p_y \hat{w}_b (\cdot,y)\|_{L^2_\xi(\R)}\leq C \| g\|_{L_x^2(\R)}$, which gives $\|\p_y w_b \|_{L^\infty_y (\R_+; L^2_x(\R))}\leq C\|g\|_{L^2_x(\R)}$. 
Next we see from \eqref{proof.prop.pre:1:2} that
\begin{align*}
|y\xi \hat{w}_b (\xi,y)| + |\xi|^\frac13 |\p_y\hat{w}_b (\xi,y)|\leq C |\xi|^\frac13 \min\{ |\xi|^{-\frac{1}{12}} y^{-\frac14}, |\xi|^{-\frac12} y^{-\frac32}\} \, |\hat{g} (\xi)|,
\end{align*}
which gives 
\begin{align}\label{proof.prop.pre:1:5}
\int_0^\infty y^2 \xi^2 |\hat{w}_b (\xi,y)|^2 + |\xi|^\frac23 |\p_y \hat{w}_b (\xi,y)|^2 \, dy\leq C |\xi|^{\frac13} |\hat{g} (\xi)|^2,
\end{align}
and therefore, $\| y \p_x w_b \|_{L^2(\R^2_+)}\leq C\| |\p_x|^\frac16 g \|_{L^2_x(\R)}$ and $\| \p_y w_b \|_{L^6_x (\R; L^2_y(\R_+))} \leq C \| |\p_x|^\frac13 \p_y w_b \|_{L^2(\R^2_+)}\leq C\| |\p_x|^\frac16 g \|_{L^2_x(\R)}$. Since $\p_y^2w_b =y\p_xw_b$, we also have the bound for $\p_y^2w_b$ in $L^2(\R^2_+)$. Hence, $w_b\in X_Q$ with $Q\equiv 0$ by Lemma \ref{lem.pre:0}, and $w_b$ is the solution to \eqref{eq.b} by its construction. The proof is complete.
\end{proof}

\begin{proposition}\label{prop.pre:2} Assume that $Q\in BC(\R)$ and $\p_xQ\in L^3(\R)\cap L^2(\R)$. Let $f\in X_Q$. Assume that $\gamma_{\p\R^2_+} \p_y f\in L^2(\R)$. Then the function $h$ defined by 
\begin{align}
h(x,y) = f (x,\Phi(x,y)),\quad \Phi (x,y) = y - \chi_R (y) Q(x)
\end{align} 
belongs to $X_C$ with $C\equiv 0$. Here,
\begin{align*}
\chi_R (y) = \chi (\frac{y}{R \|\p_y\chi\|_{W^{1,\infty}}(1+\|Q\|_\infty)})
\end{align*}
with $R\geq 2$, and $\chi$ is a smooth cut-off such that $\chi (y)=0$ for $0\leq y\leq 1$ and $\chi(y)=1$ for $y\geq 2$. 
\end{proposition}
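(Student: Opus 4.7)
The strategy is to verify the converse criterion of Lemma \ref{lem.pre:0} for $h(x,y) = f(x,\Phi(x,y))$, which places $h$ in $X_C$ with $C\equiv 0$. The basic preliminary fact is that, for each fixed $x$, the map $y\mapsto \Phi(x,y)$ is a bi-Lipschitz diffeomorphism of $\R_+$ onto itself: from $\partial_y\Phi = 1 - \partial_y \chi_R(y)Q(x)$ and the rescaling built into $\chi_R$,
\[
|\partial_y \chi_R(y)Q(x)| \leq \frac{\|\chi'\|_\infty\|Q\|_\infty}{R\|\partial_y\chi\|_{W^{1,\infty}}(1+\|Q\|_\infty)}\leq \frac{1}{R}\leq \frac{1}{2},
\]
so $\partial_y\Phi \in [\tfrac12,\tfrac32]$; together with $\Phi(x,0)=0$ and $\Phi(x,y)\to\infty$, this enables the change of variables $z = \Phi(x,y)$ with uniformly bounded Jacobian. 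Since $X_Q$ is the closure of $C_0^\infty(\overline{\R^2_+})$ and $f\mapsto f\circ\Phi$ maps smooth compactly supported $f$ (with $y$-support $[0,N]$) to smooth functions with $y$-support contained in $[0, N+\|Q\|_\infty]$, it is enough to establish $\|h\|_{X_0}\leq C(Q,R)\|f\|_{X_Q}$ on this dense subspace and extend by continuity.

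The estimates for $\|h\|_{L^\infty_x L^1_y}$, the decay as $|x|\to\infty$, and $\|\partial_y h\|_{L^6_xL^2_y}$ follow directly from the chain rule and the change of variables in $y$ using the pointwise bound on $\partial_y\Phi$ together with Lemma \ref{lem.pre:0} applied to $f$. For $\|\partial_y^2 h\|_{L^2}$, the chain rule gives $\partial_y^2 h = (\partial_y^2 f)\circ\Phi\cdot(\partial_y\Phi)^2 - (\partial_y f)\circ\Phi\cdot \partial_y^2\chi_R\,Q$. The first summand is controlled by $C\|\partial_y^2 f\|_{L^2}$ after the change of variables, while the second is supported in the band $y\in[R_1,2R_1]$ with $R_1 = R\|\partial_y\chi\|_{W^{1,\infty}}(1+\|Q\|_\infty)$ and amplitude $O\bigl(\|Q\|_\infty R^{-2}(1+\|Q\|_\infty)^{-2}\bigr)$. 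Combining this quantitative smallness and narrow support with the Sobolev interpolation $\|\partial_y f(x,\cdot)\|_{L^\infty_z}^2\leq 2\|\partial_y f(x,\cdot)\|_{L^2_z}\|\partial_y^2 f(x,\cdot)\|_{L^2_z}$ and Hölder's inequality in $x$ against $\partial_y f\in L^6_xL^2_y$ and $\partial_y^2 f \in L^2$ closes the bound.

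The most delicate step is $\|y\partial_x h\|_{L^2}$, which also reveals why $X_Q$ was defined with the convective weight. Expanding $\partial_x h = (\partial_x f)\circ\Phi - \chi_R(y)\partial_x Q(x)\,(\partial_y f)\circ\Phi$ and adding and subtracting $y\partial_x Q\,(\partial_y f)\circ\Phi$ gives
\[
y\partial_x h = y\bigl[(\partial_x f - \partial_x Q\,\partial_y f)\circ\Phi\bigr] + y\bigl(1 - \chi_R(y)\bigr)\partial_xQ(x)\,(\partial_y f)\circ\Phi.
\]
The first piece precisely matches the $X_Q$-weight: the change of variables $z = \Phi(x,y)$ and the bound $|y|\leq \tfrac32 z$ (valid globally because $|y-z| = \chi_R(y)|Q(x)|\leq \|Q\|_\infty\leq R_1/R\leq z/R$ wherever $\chi_R>0$, and $y=z$ otherwise) reduce it to $C\|y(\partial_x f - \partial_x Q\,\partial_y f)\|_{L^2}$. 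The second piece has $y(1-\chi_R)\leq 2R_1$ and is supported in $y\in[0,2R_1]$; Hölder in $x$ with $\partial_xQ \in L^3(\R)\cap L^2(\R)$ and $\partial_y f\in L^6_xL^2_y$, together with the compactness of the $y$-support, yields a bound of the form $C R_1^2\|\partial_xQ\|_{L^3}^2\|\partial_y f\|_{L^6_xL^2_y}^2$. The main obstacle throughout is that $\partial_y f$ is only in $L^6_xL^2_y$, not in $L^2(\R^2_+)$; all of the lower-order $\partial_y f$-containing contributions are tamed by exploiting the quantitative smallness and compact $y$-support of the cut-off derivatives and by invoking Sobolev-in-$y$ embeddings followed by Hölder in $x$ so as to reduce everything to the scale-invariant quantities built into the definition of $X_Q$.
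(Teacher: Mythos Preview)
Your overall plan is the paper's plan, and the arguments for $\|h\|_{L^\infty_xL^1_y}$, the decay, $\|\partial_y h\|_{L^6_xL^2_y}$, and $\|y\partial_x h\|_{L^2}$ are essentially identical to the paper's and are correct.

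There is, however, a genuine gap in your treatment of $\|\partial_y^2 h\|_{L^2}$, and it is precisely the place where the hypothesis $\gamma_{\partial\R^2_+}\partial_y f\in L^2(\R)$ is needed --- a hypothesis you never invoke. After the chain rule, the troublesome term is $(\partial_y^2\chi_R\, Q)\,(\partial_y f)\circ\Phi$, supported in the band $y\in[R_1,2R_1]$. You propose the pointwise Gagliardo--Nirenberg bound
\[
\|\partial_y f(x,\cdot)\|_{L^\infty_z}^2 \leq 2\,\|\partial_y f(x,\cdot)\|_{L^2_z}\,\|\partial_y^2 f(x,\cdot)\|_{L^2_z}
\]
and then H\"older in $x$. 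But the right-hand side is the product of a function in $L^6_x$ and one in $L^2_x$, hence lies only in $L^{3/2}_x$, not $L^1_x$; the $x$-integration does not close. No other combination of the $X_Q$ norms fixes this: from $\partial_y f\in L^6_xL^2_y$ and $\partial_y^2 f\in L^2_{x,y}$ alone one gets at best $\partial_y f\in L^3_xL^\infty_y$, still short of $L^2_x$ on the band. Consequently the inequality $\|h\|_{X_0}\le C(Q,R)\|f\|_{X_Q}$ you aim to prove on $C_0^\infty$ is in fact false in general, and the density argument built on it collapses.

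The paper's fix is to write, on the band,
\[
(\partial_y f)(x,\Phi(x,y)) = \int_0^{\Phi(x,y)} \partial_\tau^2 f(x,\tau)\,d\tau + (\partial_y f)(x,0),
\]
so that the $(\partial_y^2\chi_R\,Q)$-piece splits into a part controlled by $\|\partial_y^2 f\|_{L^2(\R^2_+)}$ (via Cauchy--Schwarz in $\tau$ and the bounded $y$-support) and a part controlled by $\|\gamma_{\partial\R^2_+}\partial_y f\|_{L^2(\R)}$. This is exactly why the extra boundary-trace assumption is present in the statement; your argument should route through it.
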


\begin{proof} We note that $1/2\leq \p_y \Phi (x,y) \leq 3/2$, and for each $x$ the map $y\mapsto \Phi (x,y)$ is a diffeomorphism on $\R_+$, and the inverse $\eta\mapsto \Phi^{-1} (x,\eta)$ is continuous in $x$. It is easy to see that $h\in L^\infty_x (\R; L^1_y(\R_+))$. Moreover, let $\{f_n\}\subset C_0^\infty (\overline{\R^2_+})$ be an approximation of $f$ in $X_Q$. Then we see 
\begin{align*}
\| h(x,\cdot)-h(x',\cdot)\|_{L^1_y(\R_+)} & = \int_0^\infty | f(x,\Phi (x,y))- f(x',\Phi (x',y))|\, d y\\
& \leq \int_0^\infty | f_n(x,\Phi (x,y))- f_n(x',\Phi (x',y))|\, d y \\
& \quad + \int_0^\infty | f(x,\Phi (x,y))- f_n(x,\Phi (x,y))|\, d y \\
& \quad + \int_0^\infty | f(x',\Phi (x',y))- f_n(x',\Phi (x',y))|\, d y\\
& \leq \int_0^\infty | f_n(x,\Phi (x,y))- f_n(x',\Phi (x',y))|\, d y \\
& \quad + \int_0^\infty | f(x,\eta)- f_n(x,\eta)|\, \frac{d \eta}{(\p_y \Phi) (x, \Phi^{-1}(x,\eta))} \\
& \quad + \int_0^\infty | f(x',\eta)- f_n(x',\eta)|\, \frac{d \eta}{(\p_y\Phi) (x',\Phi^{-1} (x',\eta))}\\
& \leq \int_0^\infty | f_n(x,\Phi (x,y))- f_n(x',\Phi (x',y))|\, d y \\
& \quad + C\| f-f_n \|_{L^\infty_xL^1_y}.
\end{align*}
This shows $h\in BC_x(\R; L^1_y(\R_+))$. Similarly, since $\|h(x,\cdot)\|_{L^1_y(\R_+)} \leq C \| f(x,\cdot) \|_{L^1_y(\R_+)}$, we have $\displaystyle \lim_{|x|\rightarrow\infty} \| h(x,\cdot)\|_{L^1_y(\R_+)} =0$. 
Next we have from 
\begin{align*}
|\p_y h (x,y)| = |\p_y \Phi (x,y) (\p_y f)(x,\Phi(x,y))|\leq C |(\p_y f ) (x,\Phi (x,y))|
\end{align*}
that 
\begin{align*}
\| \p_y h\|_{L^6_xL^2_y}\leq C \| \p_y f\|_{L^6_xL^2_y}<\infty,
\end{align*}
and from 
\begin{align*}
\p_y^2 h (x,y) & = (\p_y\Phi (x,y))^2 (\p_y^2 f)(x,\Phi (x,y)) + \p_y^2\Phi (x,y) (\p_y f) (x,\Phi (x,y)) \\
& = (\p_y\Phi (x,y))^2 (\p_y^2 f)(x,\Phi (x,y)) + \p_y^2\Phi (x,y) \big ( \int_0^{\Phi (x,y)} \p_\tau^2 f (x,\tau) \,d\tau +  (\p_y f) (x,0) \big ), 
\end{align*}
we have 
\begin{align*}
\| \p_y^2 h\|_{L^2(\R^2_+)} \leq C \| \p_y^2f\|_{L^2(\R^2_+)} + C\| \gamma_{\p\R^2_+} \p_y f\|_{L^2(\R)}.
\end{align*}
Here we have used the fact that $\p_y^2 \Phi (x,y) = -\p_y^2 \chi_R (y) Q(x)$ has a compact support in $y$ uniformly in $x$. Finally, we observe that 
 \begin{align*}
 y\p_x h (x,y) &= y\big ( (\p_x f)(x,\Phi (x,y)) - \p_x Q (x) \chi_R (y) (\p_y f) (x,\Phi (x,y)) \big )\\
 & =  y \big ( (\p_x f)(x,\Phi (x,y)) - \p_x Q (x)(\p_y f) (x,\Phi (x,y)) \big ) + y (1- \chi_R (y)) \p_x Q (x)  (\p_y f) (x,\Phi (x,y)).
\end{align*}
This implies, since $|y|= |(1-\frac{\chi_R(y) Q}{y})^{-1} \Phi (x,y)|\leq C \Phi (x,y)$ and since $y(1-\chi_R(y))$ has a compact support in $y$, 
\begin{align*}
\| y\p_x h\|_{L^2(\R^2_+)} \leq C\| y(\p_x f- \p_xQ\,\p_y f)\|_{L^2(\R^2_+)} +  C \| \p_x Q\|_{L^3(\R)} \| \p_y f\|_{L^6_x(\R; L^2_y(\R_+))}<\infty.
\end{align*}
Therefore, $h\in X_Q$ with $Q\equiv 0$ by Lemma \ref{lem.pre:0}. The proof is complete.
\end{proof}

\section{Reduction system}\label{sec.reduction}

One of the difficulties in the analysis of \eqref{ptd:2} is the presence of the term $v\p_y \omega$, which contains the linearly growing term $-y \p_x A\, \p_y \omega$ due to the equality $I_\infty[\omega] =A$. To overcome this difficulty we eliminate this term by the transformation
\begin{align}\label{trans}
w(x,y) = \omega (x, \Phi (x,y)),\quad \Phi (x,y) = y-\chi_R (y) A(x),
\end{align}
where 
\begin{align}\label{chi_R}
\chi_R (y) = \chi (\frac{y}{R\|\p_y\chi\|_{W^{1,\infty}} (1+\|A\|_\infty)})
\end{align}
with $R\geq 2$, and $\chi$ is a smooth cut-off such that $\chi (y)=0$ for $0\leq y\leq 1$ and $\chi(y)=1$ for $y\geq 2$. We may assume that $\p_y\chi\geq 0$. The number $R$ will be taken large enough but independently of data $[\omega,A]$.

\begin{proposition}\label{prop.w} Let $[\omega, A]$ be an $L^2$ strong solution to \eqref{ptd:2}. Then $w\in X_Q$ with $Q\equiv 0$ and $[w,A]$ solves the system
\begin{subequations}\label{ptd:3} 
\begin{align}
& y \p_x w -\p^2_y w = - \theta_1\p_x w - (\theta_{2,1}+\theta_{2,2}+\theta_{2,3}) \p_y w + \theta_3 \p_y^2 w,\quad (x,y)\in \R\times \R_+,\\
& \p_y w|_{y=0} = \p_x|\p_x| A, ~~ I_\infty[w (1- A\p_y \chi_R) ] = A,\quad  x\in \R,
\end{align}
\end{subequations}
where 
\begin{align}
\begin{split}
& \theta_1 = u(\cdot,\Phi)- A \chi_R,\quad (x,y)\in \R\times \R_+,\\
& \theta_{2,1}  =-  \int_0^{\Phi} \int_z^\infty \big (\p_x \omega -\p_xA \p_{\tilde z} \omega \big ) d\tilde z \, dz, \quad (x,y)\in \R\times \R_+,\\
& \theta_{2,2}  =-  \big (1-A\p_y \chi_R \big )^{-1} (1-\chi_R)  \big (u(\cdot,\Phi) - \Phi \big ) \p_x A, \quad (x,y)\in \R\times \R_+,\\
& \theta_{2,3} =  (1-A\p_y\chi_R)^{-3} \big (-2A\p_y\chi_R+(A\p_y\chi_R)^2\big ) A \p_y^2\chi_R  -  (1-A\p_y \chi_R)^{-1} A \p_y^2\chi_R, \\
& \qquad \qquad \quad ~~~~~~~~~~~~~~~~~~~~~~~~~~~~~~~~~~~~~~~~~~~~~~~~~~(x,y)\in \R\times \R_+,\\
& \theta_{3}  = (1-A\p_y\chi_R)^{-2} \big (-2A\p_y\chi_R+(A\p_y\chi_R)^2\big ), \quad (x,y)\in \R\times \R_+.
\end{split}
\end{align}
Here \eqref{ptd:3} is formulated in the similar manner as Definition \ref{def.sol} (ii).
\end{proposition}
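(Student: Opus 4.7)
The plan is to prove Proposition \ref{prop.w} in three stages: establishing $w\in X_0$, deriving the differential identity \eqref{ptd:3} pointwise (for smooth approximants) via the chain rule, and then passing to the weak formulation by density.

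For the first stage, I would invoke Proposition \ref{prop.pre:2} directly with $Q=A$ and $f=\omega$. The hypotheses of that proposition require $\omega\in X_A$, which is part of Definition \ref{def.sol}, and $\gamma_{\partial\R^2_+}\partial_y\omega\in L^2(\R)$. The latter follows from the boundary relation $\partial_y\omega|_{y=0}=\partial_x|\partial_x|A$ together with $\partial_x A\in H^{7/6}(\R)\subset\dot H^1(\R)$, so $\partial_x|\partial_x|A\in L^2(\R)$. This immediately gives $w\in X_Q$ with $Q\equiv 0$, and in particular all norms appearing in $\|\cdot\|_{X_0}$ are finite.

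For the second stage I would carry out the change of variables. Writing $\mu=\mu(x,y):=1-A(x)\partial_y\chi_R(y)$, the chain rule for $w(x,y)=\omega(x,\Phi(x,y))$ gives
\begin{align*}
(\partial_x\omega)(x,\Phi)&=\partial_x w+\chi_R\partial_x A\cdot\mu^{-1}\partial_y w,\\
(\partial_\eta\omega)(x,\Phi)&=\mu^{-1}\partial_y w,\\
(\partial_\eta^2\omega)(x,\Phi)&=\mu^{-2}\partial_y^2 w+A\partial_y^2\chi_R\mu^{-3}\partial_y w.
\end{align*}
Plugging these into the equation $\eta\partial_x\omega-\partial_\eta^2\omega+u\partial_x\omega+v\partial_\eta\omega=0$ evaluated at $\eta=\Phi$, and using $y=\Phi+A\chi_R$ to split $\Phi\partial_x w=y\partial_x w-A\chi_R\partial_x w$, the coefficient of $\partial_x w$ collapses to $-\theta_1=-(u(\cdot,\Phi)-A\chi_R)$. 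The identity $\mu^{-2}=1-\theta_3$ extracts the principal Laplacian: $-\mu^{-2}\partial_y^2 w=-\partial_y^2w+\theta_3\partial_y^2 w$ (after moving to the right-hand side). For the coefficient of $\partial_y w$ I would substitute the representation \eqref{eq.v} of $v$ at $y=\Phi$,
\[
v(x,\Phi)=-\Phi\,\partial_x A-\partial_x A\,u(x,\Phi)+\int_0^\Phi\!\!\int_z^\infty(\partial_x\omega-\partial_x A\,\partial_{\tilde z}\omega)\,d\tilde z\,dz,
\]
and combine with the $\chi_R\partial_x A$-terms generated by $(\partial_x\omega)(x,\Phi)$; the cancellation between the linearly growing $-\Phi\partial_x A$ piece and the $\chi_R\partial_x A$ pieces is precisely what produces the factor $(1-\chi_R)$ in $\theta_{2,2}$, while the double integral becomes $\theta_{2,1}$ and the two $A\partial_y^2\chi_R$-terms combine into $\theta_{2,3}$. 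The boundary condition $\partial_y w|_{y=0}=\partial_x|\partial_x|A$ is immediate because $\chi_R\equiv 0$ near $y=0$, so $\Phi(x,0)=0$ and $\partial_y\Phi(x,0)=1$. The far-field condition $I_\infty[w(1-A\partial_y\chi_R)]=A$ is the direct change-of-variables $\int_0^\infty\omega(x,\eta)\,d\eta=\int_0^\infty\omega(x,\Phi(x,y))\,\partial_y\Phi(x,y)\,dy$ applied to $I_\infty[\omega]=A$.

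For the third stage I would interpret the pointwise identity in the distributional sense prescribed by Definition \ref{def.sol}, analogous to the formulation for $\omega$. Approximating $\omega$ by smooth compactly supported functions $\omega_n$ (as is built into $X_A$), the above algebraic manipulation is exact, and the passage to the limit is handled term by term using the estimates already verified in Remark following Definition \ref{def.sol} and in Proposition \ref{prop.pre:2}; Lemma \ref{lem.pre:0} then confirms that $w$ lies in $X_0$ with the prescribed decay at $|x|=\infty$. The main obstacle, in my view, is the bookkeeping in the derivation of the $\partial_y w$-coefficient: one must write $v(x,\Phi)$ via the identity \eqref{eq.v} rather than its naive definition, because only this representation exposes the exact cancellation of the linear-in-$y$ part against $\Phi\chi_R\partial_x A$ and produces a $\theta_{2,1}$ that is integrable in the $X_0$-framework. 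The remaining subtlety is that $\theta_3$ and the coefficient of $\partial_y w$ coming from $\partial_y^2\Phi$ must be recombined so that the cut-off contributions are bounded and compactly supported in $y$; this is what the splittings in the definitions of $\theta_{2,3}$ and $\theta_3$ implement.
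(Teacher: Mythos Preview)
Your proposal is correct and follows essentially the same route as the paper. The only organizational difference is that the paper first regroups the vorticity equation in the form
\[
y(\partial_x\omega-\partial_xA\,\partial_y\omega)-\partial_y^2\omega=-u(\partial_x\omega-\partial_xA\,\partial_y\omega)-\int_0^y\!\!\int_z^\infty(\partial_x\omega-\partial_xA\,\partial_{\tilde z}\omega)\,d\tilde z\,dz\cdot\partial_y\omega
\]
\emph{before} evaluating at $y=\Phi$ and inverting the chain rule, whereas you substitute the chain-rule identities directly and then insert \eqref{eq.v} for $v(\cdot,\Phi)$; the two orderings are algebraically equivalent and both expose the same cancellation of the linearly growing $-\Phi\,\partial_xA$ term against the $\chi_R\partial_xA$ contributions. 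Your invocation of Proposition~\ref{prop.pre:2} for $w\in X_0$ and the change-of-variables argument for the boundary and integral conditions exactly match the paper's treatment.
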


The proof of Proposition \ref{prop.w} is postponed to the appendix.
The key point of \eqref{ptd:3} is that the linearly growing term $-y\p_xA\, \p_y \omega$ in the original system \eqref{ptd:2} is replaced mainly by $-\theta_{2,1} \p_y w$ in \eqref{ptd:3}, for which the growth in $y$ is reduced.

Let $[\omega, A]$ be an $L^2$ strong solution to \eqref{ptd:2}. Then $w\in X_Q$ with $Q\equiv 0$ by Proposition \ref{prop.pre:2}. Next we take $w_b\in X_Q$ with $Q\equiv 0$ as the solution to \eqref{eq.b} with $g=\p_x |\p_x| A\in H^\frac16(\R) \cap \dot{H}^{-1}(\R)$. Then we set 
\begin{align}
w_{in} =w-w_b,
\end{align}
which belongs to $X_Q$ with $Q\equiv 0$, and solves the equation 
\begin{subequations}\label{ptd:4} 
\begin{align}
& y \p_x w_{in} -\p^2_y w_{in} = - \theta_1\p_x w - (\theta_{2,1}+\theta_{2,2}+\theta_{2,3}) \p_y w + \theta_3 \p_y^2 w,\quad (x,y)\in \R\times \R_+,\\
& \p_y w_{in}|_{y=0} = 0, \quad x\in\R,\\
& I_\infty[w_{in}] = -I_\infty[w_b] +  \big ( 1+ I_\infty [w\p_y \chi_R]\big ) A,\quad  x\in \R.
\end{align}
\end{subequations}
Here \eqref{ptd:4} is again formulated in the similar manner as Definition \ref{def.sol} (ii).
As we will see in the next subsection, the third equation of \eqref{ptd:4}  in fact defines the elliptic equation for $A$, of the order $|\p_x|^\frac43$. 
For later use, in this section we set 
\begin{align}\label{df.sc}
\begin{split}
\|[\omega,A]\|_{sc} & = \| \omega \|_{L_x^\infty (\R; L^1_y (\R_+))} \| \p_x A\|_{L^3(\R)} + \|\omega\|_{L^\infty(\R^2_+)} \\
& \quad + \| \p_y \omega\|_{L^6_x (\R; L^2_y (\R_+))} + \| y( \p_x \omega - \p_x A\, \p_y \omega )\|_{L^{\frac{12}{5},\infty}_x(\R; L^{\frac43,\infty}_y(\R_+))}.
\end{split}
\end{align}

\subsection{Elliptic equation for $A$}\label{subsec.A}

In this subsection we focus on the equation 
\begin{align}\label{eq.A}
-I_\infty[w_b] +  \big ( 1+ I_\infty [w\p_y \chi_R]\big ) A= I_\infty[w_{in}], \quad x\in \R,
\end{align}
which appears in the third equation of \eqref{ptd:4}.

\begin{lemma}\label{lem.I} Let $f\in X_Q$ with $Q\equiv 0$. Then 
\begin{align}\label{est.lem.I}
\| |\p_x|^\frac56 I_\infty[f]\|_{L^2(\R)} \leq C \big ( \|y\p_x f\|_{L^2 (\R^2_+)} + \| |\p_x|^\frac23 f\|_{L^2(\R^2_+)}\big ).
\end{align}
\end{lemma}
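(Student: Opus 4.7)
The plan is to work on the Fourier side in $x$ and bound the scalar quantity $\widehat{I_\infty[f]}(\xi) = \int_0^\infty \hat f(\xi,y)\, dy$ pointwise by a combination of $\int |\hat f|^2\, dy$ and $\int y^2|\hat f|^2\, dy$, optimizing the cut-off. Since the inequality is stated for $f\in X_Q$ with $Q\equiv 0$, by the density of $C_0^\infty(\overline{\R^2_+})$ in $X_Q$ it suffices to prove the bound for such smooth compactly supported $f$; in particular all integrals below are finite and the Fourier transform in $x$ is classical.

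First, for fixed $\xi\neq 0$ and any $R>0$, I would split $[0,\infty)=[0,R]\cup [R,\infty)$ and apply Cauchy--Schwarz on each piece, treating the tail via the obvious weight $y^{-1}\cdot y$:
\begin{align*}
\Bigl|\int_0^\infty \hat f(\xi,y)\,dy\Bigr|^2
\le 2R\int_0^R |\hat f(\xi,y)|^2\,dy + 2R^{-1}\int_R^\infty y^2|\hat f(\xi,y)|^2\,dy.
\end{align*}
The two terms on the right correspond, up to powers of $\xi$, to $\||\p_x|^{2/3}f\|_{L^2}^2$ and $\|y\p_x f\|_{L^2}^2$, and the power of $|\xi|$ we wish to balance against is $|\xi|^{5/3}$. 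This forces the choice $R=R(\xi)=|\xi|^{-1/3}$, which gives the identities $R\cdot|\xi|^{5/3}=|\xi|^{4/3}$ and $R^{-1}\cdot|\xi|^{5/3}=|\xi|^{2}$.

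Multiplying the pointwise bound by $|\xi|^{5/3}$, integrating in $\xi$, and extending each $y$-integral to all of $\R_+$ yields
\begin{align*}
\int_\R |\xi|^{5/3}\bigl|\widehat{I_\infty[f]}(\xi)\bigr|^2\, d\xi
\le 2\int_\R\!\!\int_0^\infty |\xi|^{4/3}|\hat f|^2\,dy\,d\xi + 2\int_\R\!\!\int_0^\infty \xi^2 y^2|\hat f|^2\,dy\,d\xi,
\end{align*}
and Plancherel in $x$ identifies the right-hand side with $2\||\p_x|^{2/3}f\|_{L^2(\R^2_+)}^2 + 2\|y\p_x f\|_{L^2(\R^2_+)}^2$. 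Taking square roots gives \eqref{est.lem.I}. The main (mild) obstacle is simply to verify that the choice of exponents in the Cauchy--Schwarz split is the one dictated by the scaling of the RHS, which is exactly the scaling of the kinetic operator $y\p_x-\p_y^2$ encountered in Proposition \ref{lem.pre:1}; no harder analytic ingredient is needed.
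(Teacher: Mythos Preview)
Your proof is correct and essentially identical to the paper's: both pass to the Fourier side in $x$, split $\int_0^\infty \hat f(\xi,y)\,dy$ at $y=|\xi|^{-1/3}$, apply Cauchy--Schwarz on each piece (with the weight $y^{-1}\cdot y$ on the tail), and conclude by Plancherel. The only cosmetic difference is that the paper bounds $|I_\infty[\hat f](\xi)|$ directly rather than its square, but the computation and the choice of split point are the same.
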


\begin{proof} It suffices to consider the case $f\in C_0^\infty (\overline{\R^2_+})$. Let $\hat{f}(\xi,y)$ be the Fourier transform of $f$ in the $x$ variable. Then 
\begin{align*}
|I_\infty[\hat{f}] (\xi)| & \leq \int_0^\infty |\hat{f}(\xi,y)| \, dy \\
& \leq \int_0^{|\xi|^{-\frac13}} |\hat{f}(\xi,y)| \, dy + \int_{|\xi|^{-\frac13}}^\infty \frac{1}{y} \, y|\hat{f}(\xi,y)|\, d y\\
& \leq C |\xi|^{-\frac16} \| \hat{f}(\xi,\cdot)\|_{L^2_y (\R_+)} + C |\xi|^{\frac16} \| y\hat{f} (\xi,\cdot)\|_{L^2_y(\R_+)},
\end{align*}
which gives $\||\xi|^\frac56 I_\infty[\hat{f}]\|_{L^2_\xi(\R)} \leq C \| |\xi|^\frac23 \hat{f} \|_{L^2_\xi(\R;  L^2_y(\R_+))} + C \| |\xi| y \hat{f}\|_{L^2_\xi (\R; L^2_y (\R_+))}$. Hence, \eqref{est.lem.I} follows from the Plancherel theorem. The proof is complete. 
\end{proof}

The main result of this subsection is stated as follows.
\begin{proposition}\label{prop.A} There exist $R_0\geq 2$ and $\delta_0>0$ such that if $R\geq R_0$ and $\| \omega\|_{L^\infty (\R^2_+)}\leq \delta_0$, then
\begin{align}\label{est.prop.A}
\| |\p_x|^\frac56 A \|_{H^\frac43 (\R)} \leq C\big ( R^{-1} \| w_{in} \|_Y +  \| |\p_x|^\frac56 I_\infty [w_{in}]\|_{L^2(\R)}\big ).
\end{align}
In particular, we have $\| |\p_x|^\frac56 A \|_{H^\frac43 (\R)} \leq C \| w_{in}\|_Y$ by Lemma \ref{lem.I}.
\end{proposition}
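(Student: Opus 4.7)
The plan is to recast \eqref{eq.A} as an elliptic equation for $A$ of order $4/3$ and invert by Fourier analysis, treating the nonlinear coupling through $I_\infty[w\p_y\chi_R]$ as a small perturbation.

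First I would use the Airy representation of $w_b$ from Proposition \ref{prop.pre:1}, together with the relation $\widehat{\p_x|\p_x|A}(\xi)=i\xi|\xi|\hat{A}(\xi)$, to identify $I_\infty[w_b]$ as a Fourier multiplier applied to $A$. Integrating $\hat{w}_b(\xi,y)$ in $y$ via the substitution $\eta = e^{\pm i\pi/6}|\xi|^{1/3}y$, and deforming the contour back to the positive real axis (permitted by the exponential decay of ${\rm Ai}(z)$ in $|\arg z|<\pi/3$ from \eqref{proof.prop.pre:1:1}), the inner integral reduces to the constant $\int_0^\infty {\rm Ai}(s)\,ds = \tfrac{1}{3}$. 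This yields $\widehat{I_\infty[w_b]}(\xi) = \mu(\xi)\,\hat{A}(\xi)$ for an explicit symbol $\mu$ of order $4/3$ satisfying $-\Re\mu(\xi)\geq c|\xi|^{4/3}$ (with the sign determined by $\Re(ie^{\mp i\pi/3})/\operatorname{Ai}'(0)$). Consequently $\mathcal{L}:=1-\mu(\p_x)$ is elliptic with $|\mathcal{L}(\xi)|\geq \Re\mathcal{L}(\xi)\gtrsim 1+|\xi|^{4/3}$.

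Next, equation \eqref{eq.A} rewrites as
\begin{equation*}
\mathcal{L}A + I_\infty[w\p_y\chi_R]\,A = I_\infty[w_{in}].
\end{equation*}
Inverting $\mathcal{L}$ in Fourier, and noting that $|\xi|^{5/6}(1+|\xi|^2)^{2/3}\,|\mathcal{L}(\xi)|^{-1}\lesssim |\xi|^{5/6}$ uniformly in $\xi$, Plancherel gives
\begin{equation*}
\||\p_x|^{5/6}A\|_{H^{4/3}} \lesssim \||\p_x|^{5/6} I_\infty[w_{in}]\|_{L^2} + \bigl\||\p_x|^{5/6}\bigl(I_\infty[w\p_y\chi_R]\,A\bigr)\bigr\|_{L^2}.
\end{equation*}

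Finally, I would decompose $w=w_b+w_{in}$ in the last term. The cut-off obeys $\|\p_y\chi_R\|_{L^\infty}\lesssim [R(1+\|A\|_\infty)]^{-1}$ and $\|\p_y\chi_R\|_{L^1}=1$, so combined with the smallness $\|w\|_{L^\infty}\leq C\|\omega\|_{L^\infty}\leq C\delta_0$ (using Proposition \ref{prop.pre:2}), the function $I_\infty[w\p_y\chi_R](x)$ is small in $L^\infty_x$. A Kato--Ponce-type fractional product estimate then yields
\begin{equation*}
\bigl\||\p_x|^{5/6}\bigl(I_\infty[w\p_y\chi_R]\,A\bigr)\bigr\|_{L^2} \leq C\delta_0\,\||\p_x|^{5/6}A\|_{L^2} + CR^{-1}\|w_{in}\|_Y,
\end{equation*}
where the first term is absorbed on the left for $\delta_0$ sufficiently small, and the $w_{in}$-contribution to $I_\infty[w\p_y\chi_R]$ supplies the residual $R^{-1}\|w_{in}\|_Y$, using $\|\p_y\chi_R\|_{L^\infty}\lesssim R^{-1}$ together with the fractional norms of $w_{in}$ encoded in $\|\cdot\|_Y$ via \eqref{est.lem.pre:1:3}. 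The $w_b$-contribution is handled directly from the Airy bounds of Proposition \ref{prop.pre:1}, and gives a factor that can likewise be absorbed by smallness.

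The main obstacle will be the fractional product estimate of step three: one must simultaneously trade $x$-derivatives between the $I_\infty[w\p_y\chi_R]$ factor (exploiting the $\chi_R$-localization for $R^{-1}$ gain) and $A$ (exploiting $\delta_0$-smallness for absorption), all while remaining in the scale-critical framework. The very design of $\chi_R$, with scale $R(1+\|A\|_\infty)$, is precisely what balances these two mechanisms: the factor $1+\|A\|_\infty$ in the cut-off ensures scale compatibility, and the free parameter $R$ generates the small prefactor needed to control the error and close the estimate.
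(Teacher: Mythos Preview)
Your proposal is correct and follows essentially the same route as the paper: compute $I_\infty[w_b]$ via the Airy representation to obtain an order-$4/3$ elliptic multiplier, invert it, apply Kato--Ponce to the product $I_\infty[w\p_y\chi_R]\,A$, and close by absorbing the $\|\omega\|_{L^\infty}$-small and $R^{-1}$-small terms after the split $w=w_b+w_{in}$. The only minor imprecision is that the $w_b$-contribution is absorbed by largeness of $R$ (it yields $CR^{-1}\||\p_x|^{5/6}A\|_{H^{4/3}}$ through Proposition~\ref{prop.pre:1}), not by $\delta_0$-smallness; otherwise your outline matches the paper step for step.
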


\begin{proof} Let us first compute $I_\infty[w_b]$. For this purpose let $\hat{w}_b$ be the Fourier transform of $w_b$ in the $x$ variable. Then, the proof of Proposition \ref{prop.pre:1} implies the formula 
\begin{align}
I_\infty[\hat{w}_b] (\xi) & = 
\begin{cases}
& \displaystyle \frac{1}{{\rm Ai}'(0) e^{\frac{\pi}{6}i} \xi^\frac13} \int_0^\infty {\rm Ai} (e^{\frac{\pi}{6}i} \xi^\frac13 y) \, dy \, i\xi|\xi| \hat{A}(\xi),\quad \xi>0, \\
& \displaystyle \frac{1}{{\rm Ai}'(0) e^{-\frac{\pi}{6}i} (-\xi)^\frac13} \int_0^\infty {\rm Ai} (e^{-\frac{\pi}{6}i} (-\xi)^\frac13 y) \, dy \, i\xi|\xi| \hat{A}(\xi),\quad \xi<0, 
\end{cases} \nonumber \\
& =
\begin{cases}
& \displaystyle \frac{1}{{\rm Ai}'(0) e^{\frac{\pi}{3}i} \xi^\frac23} \int_0^\infty {\rm Ai} (r) \, dr \, i\xi|\xi| \hat{A}(\xi),\quad \xi>0, \\
& \displaystyle \frac{1}{{\rm Ai}'(0) e^{-\frac{\pi}{3}i} (-\xi)^\frac23} \int_0^\infty {\rm Ai} (r) \, dr \, i\xi|\xi| \hat{A}(\xi),\quad \xi<0. 
\end{cases}\nonumber 
\end{align}
Since $\displaystyle \int_0^\infty {\rm Ai} (r) \, dr=\frac13$, we have 
\begin{align}\label{proof.prop.A:2}
I_\infty [\hat{w}_b] (\xi ) = \sigma (\xi) i\xi |\xi|^\frac13 \hat{A}(\xi),\quad \sigma (\xi) =
\begin{cases} 
& \displaystyle \frac{1}{3{\rm Ai}'(0)e^{\frac{\pi}{3}i}},\quad \xi>0,\\
& \displaystyle \frac{1}{3{\rm Ai}'(0)e^{-\frac{\pi}{3}i}},\quad \xi<0.
\end{cases}
\end{align}
We note that 
\begin{align}\label{proof.prop.A:3}
\Re (\sigma (\xi)) = \frac{1}{6{\rm Ai}'(0)},\quad \xi\ne 0.
\end{align}
Therefore, the multiplier
\begin{align}\label{proof.prop.A:4}
m(\xi) = 1- \sigma (\xi) i\xi |\xi|^\frac13
\end{align}
is invertible for any $\xi\in \R$, and has the estimate 
\begin{align}\label{proof.prop.A:5}
|\frac{1}{m(\xi)}| \leq \frac{C}{1+|\xi|^\frac43},\quad \xi\in \R.
\end{align}
Hence, \eqref{eq.A} is written as 
\begin{align}\label{proof.prop.A:6}
A = - m(-i\p_x)^{-1} I_\infty [w\p_y\chi_R] A + m(-\p_x)^{-1} I_\infty [w_{in}].
\end{align}
Here $m(-i\p_x)^{-1}$ is the Fourier multiplier operator whose symbol is $1/m(\xi)$.
Since $m(-i\p_x)^{-1}$ is bounded from $L^2(\R)$ to $H^\frac43(\R)$ and since $|\p_x|^\frac56$ commutes with $m(-i\p_x)^{-1}$, we have  from \eqref{proof.prop.A:6},
\begin{align}\label{proof.prop.A:7}
\| |\p_x|^\frac56 A \|_{H^\frac43(\R)} &\leq C\| |\p_x|^\frac56 \big ( I_\infty [w\p_y\chi_R] A\big ) \|_{L^2(\R)} + C \| |\p_x|^\frac56 I_\infty[w_{in}]\|_{L^2(\R)} \nonumber \\
\begin{split}
& \leq C \| |\p_x|^\frac56 I_{\infty}[w\p_y\chi_R]\|_{L^2(\R)} \|A\|_{L^\infty (\R)} + C \| I_\infty[w\p_y\chi_R]\|_{L^\infty(\R)} \| |\p_x|^\frac56 A\|_{L^2(\R)}\\
& \quad + C \| |\p_x|^\frac56 I_\infty[w_{in}]\|_{L^2(\R)}.
\end{split}
\end{align}
Here we have used the Kato-Ponce inequality in the homogeneous Sobolev spaces; see, e.g., Grafakos and Oh \cite{GraOh}.
Lemma \ref{lem.I} implies 
\begin{align}\label{proof.prop.A:8}
\| |\p_x|^\frac56 I_{\infty}[w\p_y\chi_R]\|_{L^2(\R)} &\leq C \big ( \| y\p_x w\p_y \chi_R\|_{L^2(\R^2_+)} + \| |\p_x|^\frac23 w\p_y \chi_R \|_{L^2(\R^2_+)}\big )\nonumber \\
& \leq C \big ( \| y\p_x w\|_{L^2(\R^2_+)} + \| |\p_x|^\frac23 w \|_{L^2(\R^2_+)}\big ) \| \p_y \chi_R \|_{L^\infty_y} \nonumber \\
& \leq C \big ( \| y\p_x w\|_{L^2(\R^2_+)} + \| |\p_x|^\frac23 w \|_{L^2(\R^2_+)}\big ) R^{-1} (1+\|A\|_{L^\infty(\R)})^{-1}.
\end{align}
In the last line above, we have used the definition of $\chi_R$ in \eqref{chi_R}.
On the other hand, since $\|w\|_{L^\infty(\R^2_+)}=\|\omega\|_{L^\infty (\R^2_+)}$, we have 
\begin{align}\label{proof.prop.A:9}
\| I_\infty [w\p_y\chi_R]\|_{L^\infty (\R)} \leq \|\p_y\chi_R\|_{L^1_y(\R_+)} \| w\|_{L^\infty(\R^2_+)} \leq \|\omega\|_{L^\infty (\R^2_+)}.
\end{align}
Hence, \eqref{proof.prop.A:7}, \eqref{proof.prop.A:8}, and \eqref{proof.prop.A:9} yield
\begin{align}\label{proof.prop.A:10}
\| |\p_x|^\frac56 A \|_{H^\frac43(\R)} 
& \leq CR^{-1}  \big ( \| y\p_x w\|_{L^2(\R^2_+)} + \| |\p_x|^\frac23 w \|_{L^2(\R^2_+)}\big ) + C \|\omega\|_{L^\infty (\R^2_+)} \| |\p_x|^\frac56 A\|_{L^2(\R)} \nonumber \\
& \quad + C \| |\p_x|^\frac56 I_\infty[w_{in}]\|_{L^2(\R)}\nonumber \\
\begin{split}
& \leq CR^{-1}  \big ( \| y\p_x w_b\|_{L^2(\R^2_+)} +\| y\p_x w_{in} \|_{L^2(\R^2_+)} + \| |\p_x|^\frac23 w_b \|_{L^2(\R^2_+)} + \| |\p_x|^\frac23 w_{in} \|_{L^2(\R^2_+)}\big ) \\
& \quad + C \|\omega\|_{L^\infty (\R^2_+)} \| |\p_x|^\frac56 A\|_{L^2(\R)} + C \| |\p_x|^\frac56 I_\infty[w_{in}]\|_{L^2(\R)}.
\end{split}
\end{align}
By applying Proposition \ref{prop.pre:1}, we have $ \| y\p_x w_b\|_{L^2(\R^2_+)}  + \| |\p_x|^\frac23 w_b \|_{L^2(\R^2_+)} \leq C \| |\p_x|^\frac16 \p_x|\p_x| A\|_{L^2(\R)}\leq \||\p_x|^\frac56 A\|_{H^\frac43(\R)}$. Hence, \eqref{proof.prop.A:10} implies that there exist $R_0\geq 2$ and $\delta_0>0$ such that if $R\geq R_0$ and $\| \omega\|_{L^\infty (\R^2_+)}\leq \delta_0$ then \eqref{est.prop.A} holds. The proof is complete.
\end{proof}

\subsection{Estimate of $w_{in}$}

In this subsection we establish the estimate of $w_{in}\in X_Q$ with $Q\equiv 0$ satisfying the first two equations of \eqref{ptd:4}. Recall that $w=w_{in} + w_b$, and $w$ and $\omega$ are related as in \eqref{trans}. 
\begin{lemma}\label{lem.in:1} Let $[\omega,A]$ be an $L^2$ strong solution to \eqref{ptd:3} and $\omega\in L^\infty (\R^2_+)$. Then 
\begin{align}\label{est.lem.in:1:1}
\begin{split}
& \| \theta_1 \p_x w \|_{L^2(\R^2_+)} + \sum_{j=1,2}\|\theta_{2,j} \p_y w\|_{L^2(\R^2_+)} + \| \theta_3 \p_y^2 w\|_{L^2(\R^2_+)} \\
& \leq C (R \|[\omega,A]\|_{sc} + R^{-1} ) \| w\|_Y + C R  \|\p_x A\|_{L^3_x(\R)} \| \p_y \omega \|_{L^6_x(\R; L^2_y (\R_+))},
\end{split}
\end{align}
and 
\begin{align}\label{est.lem.in:1:2}
\| \theta_{2,3} \p_y w \|_{L^2(\R_+)} & \leq C R^{-1} \big ( \| \p_y^2 w_{in} \|_{L^2(\R^2_+)} + R^{-\frac12} \|\p_y w_b \|_{L^\infty_y (\R_+; L_x^2(\R))}\big ). 
\end{align}
\end{lemma}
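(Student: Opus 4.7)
The plan is to treat the five nonlinear terms separately, exploiting three structural features: (a) the cut-off scales $\|\p_y^k\chi_R\|_\infty \lesssim R^{-k}(1+\|A\|_\infty)^{-k}$ with support localized near $y\sim R(1+\|A\|_\infty)$; (b) the comparison $1/2\le \p_y\Phi \le 3/2$, so norms of $w$ and $\omega\circ\Phi$ are equivalent; and (c) the fact that, for $R\ge 2$, $|A\p_y\chi_R|\le 1/R \le 1/2$, which makes the factors $(1-A\p_y\chi_R)^{-k}$ uniformly bounded.

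For $\theta_1 \p_x w$, I would write $|\theta_1|/y \le |u(\cdot,\Phi)|/y + |A\chi_R|/y$. Using $|u(x,\Phi)| \le \Phi\|\omega\|_\infty \le (3/2)y\|\omega\|_\infty$ for the first piece and, on the support of $\chi_R$, $y\ge R(1+\|A\|_\infty)$ so that $|A\chi_R|/y\le R^{-1}$ for the second, one obtains $\|\theta_1\p_x w\|_{L^2}\le C(\|\omega\|_\infty + R^{-1})\|y\p_x w\|_{L^2}$. For $\theta_3 \p_y^2 w$, the uniform bound $|\theta_3|\le CR^{-1}$ gives $\|\theta_3\p_y^2 w\|_{L^2}\le CR^{-1}\|w\|_Y$ immediately. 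For $\theta_{2,2}\p_y w$, the support $\{1-\chi_R\neq 0\}$ lies in $\{y\le 2R(1+\|A\|_\infty)\}$, and there $|u(\cdot,\Phi) - \Phi|\le \Phi(1+\|\omega\|_\infty)$, so $|\theta_{2,2}|\le CR(1+\|A\|_\infty)(1+\|\omega\|_\infty)|\p_x A|\mathbf{1}_{y\le 2R(1+\|A\|_\infty)}$; Hölder in $x$ against $\|\p_x A\|_{L^3_x}\|\p_y w\|_{L^6_xL^2_y}$, followed by a change of variables $y\mapsto\Phi$ to pass from $\|\p_y w\|_{L^6_xL^2_y}$ to $\|\p_y\omega\|_{L^6_xL^2_y}$, produces precisely the summand $CR\|\p_x A\|_{L^3}\|\p_y\omega\|_{L^6_xL^2_y}$ in \eqref{est.lem.in:1:1}.

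The delicate step is $\theta_{2,1}\p_y w$, which is where the scale-invariant weak-Lorentz norm enters. Setting $F = \p_x\omega - \p_x A\,\p_y\omega$ and exchanging the order of integration yields $|\theta_{2,1}(x,y)| \le 2\int_0^\infty |F(x,z)|\min(z,\Phi)\,dz$. Writing $\min(z,\Phi)=z\cdot\min(1,\Phi/z)$ and applying the generalized Hölder inequality (Proposition \ref{prop.pre.product}) with $zF(x,\cdot)\in L^{4/3,\infty}_z$ and $\min(1,\Phi/z)\in L^{4,1}_z$ of norm $\le C\Phi^{1/4}$, one gets $|\theta_{2,1}(x,y)|\le Cy^{1/4}G(x)$ where $G(x)=\|zF(x,\cdot)\|_{L^{4/3,\infty}_z}$. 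A second application of Hölder in $x$ in Lorentz spaces, using $G\in L^{12/5,\infty}_x$ and $\|y^{1/4}\p_y w(x,\cdot)\|_{L^2_y}^2 \in L^{6,1}_x$ (the latter via $\|y^{1/4}\p_y w\|_{L^{12,2}_xL^2_y}\lesssim \|w\|_Y$ from Corollary \ref{cor:lem.pre:1}), delivers $\|\theta_{2,1}\p_y w\|_{L^2}\le C\|[\omega,A]\|_{sc}\|w\|_Y$. I expect this step to be the main obstacle, since it is the only one that forces the use of Lorentz norms in both variables and justifies the choice of the scale-critical functional framework of Theorem \ref{main.thm:1}.

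Finally, for \eqref{est.lem.in:1:2}, I exploit that $\p_y^2\chi_R$ is supported in a thin annulus $\{y\sim R(1+\|A\|_\infty)\}$ of width $\sim R(1+\|A\|_\infty)$, and $|A\p_y^2\chi_R|\le CR^{-2}(1+\|A\|_\infty)^{-1}$, giving $|\theta_{2,3}|\le CR^{-2}(1+\|A\|_\infty)^{-1}\mathbf{1}_{\{y\sim R(1+\|A\|_\infty)\}}$. Splitting $w = w_{in}+w_b$: for $w_{in}$, the boundary condition $\p_y w_{in}|_{y=0}=0$ in \eqref{ptd:4} yields $|\p_y w_{in}(x,y)|\le y^{1/2}\|\p_y^2 w_{in}(x,\cdot)\|_{L^2_y}$, and integrating over the annulus of width $R(1+\|A\|_\infty)$ and using $y\le CR(1+\|A\|_\infty)$ gives the $CR^{-1}\|\p_y^2 w_{in}\|_{L^2}$ contribution; for $w_b$, one instead uses the uniform-in-$y$ estimate $\|\p_y w_b\|_{L^\infty_y(\R_+;L^2_x(\R))}$ from Proposition \ref{prop.pre:1}, and integrating over the annulus produces the extra $R^{-1/2}$ factor, yielding $CR^{-3/2}\|\p_y w_b\|_{L^\infty_y L^2_x}$.
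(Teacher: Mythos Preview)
Your proposal is correct and follows essentially the same route as the paper's proof: the term-by-term treatment, the use of $|\theta_1|\le Cy(\|\omega\|_\infty+R^{-1})$, the Lorentz duality in $y$ to extract $y^{1/4}$ from $\theta_{2,1}$ followed by the $L^{12/5,\infty}_x\times L^{12,2}_x\to L^2_x$ pairing against Corollary~\ref{cor:lem.pre:1}, and the split $w=w_{in}+w_b$ for $\theta_{2,3}$ all match. Two cosmetic differences: for $\theta_{2,1}$ the paper bounds the inner integral $\int_z^\infty$ directly via $\|1/\tilde z\|_{L^{4,1}(z,\infty)}\le Cz^{-3/4}$ and then integrates $\int_0^\Phi z^{-3/4}\,dz$, whereas you exchange the order of integration first---both yield the same $Cy^{1/4}\|zF(x,\cdot)\|_{L^{4/3,\infty}_z}$; for $\theta_{2,3}\p_y w_{in}$ the paper invokes the Hardy inequality $\|y^{-1}\p_y w_{in}\|_{L^2}\le C\|\p_y^2 w_{in}\|_{L^2}$ (using $\p_y w_{in}|_{y=0}=0$) together with $\|Ay\p_y^2\chi_R\|_\infty\le CR^{-1}$, while you use the equivalent pointwise bound $|\p_y w_{in}|\le y^{1/2}\|\p_y^2 w_{in}(x,\cdot)\|_{L^2_y}$ and integrate over the annulus. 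For $\theta_{2,2}$ your estimate carries an extra factor $(1+\|A\|_\infty)$, but since $\|A\|_\infty\le\|\omega\|_{L^\infty_xL^1_y}$ this is absorbed into the $CR\|[\omega,A]\|_{sc}\|w\|_Y$ term exactly as in the paper (which performs the same absorption implicitly).
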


\begin{proof} We first observe from \eqref{chi_R} that $\frac{|A\chi_R(y)|}{y}| \leq CR^{-1}$ with a numerical constant $C>0$. This implies from \eqref{trans} that 
\begin{align*}
|\Phi(x, y)| \le y + \chi_R(y) |A(x)| \le (1 + C R^{-1})y, 
\end{align*}
 and therefore that 
\begin{align*}
|u(x,\Phi (x,y))| + |A(x) \chi_R (y)|\leq \|\omega\|_{L^\infty(\R^2_+)} \Phi (x,y)  + CR^{-1}y\leq C y \big (\|\omega\|_{L^\infty (\R^2_+)} + R^{-1}\big ).
\end{align*}
This gives 
\begin{align}\label{proof.lem.in:1:1}
\|\theta_1\p_x w\|_{L^2(\R^2_+)}\leq C \big (\|\omega\|_{L^\infty (\R^2_+)} + R^{-1}\big )\| y\p_x w\|_{L^2(\R^2_+)}.
\end{align}
Next we see from Proposition \ref{prop.pre.product} that 
\begin{align}
|\theta_{2,1} (x,y)| & \leq C \int_0^{\Phi (x,y)} \| \frac{1}{\tilde z} \|_{L_{\tilde z}^{4,1}(z,\infty)} \| \tilde z (\p_x\omega-\p_x A\, \p_{\tilde z} \omega)(x,\cdot)\|_{L^{\frac43,\infty}_{\tilde z}(\R_+)} \, d z \nonumber \\
& \leq C \int_0^{\Phi (x,y)} z^{-\frac34} \, d z\, \| \tilde z (\p_x\omega-\p_x A\, \p_{\tilde z} \omega )(x,\cdot)\|_{L^{\frac43,\infty}_{\tilde z}(\R_+)} \nonumber \\
& \leq C y^\frac14 \| \tilde z (\p_x\omega-\p_x A\, \p_{\tilde z} \omega )(x,\cdot)\|_{L^{\frac43,\infty}_{\tilde z}(\R_+)}.\nonumber 
\end{align}
Then, again from Proposition \ref{prop.pre.product},
\begin{align*} 
\| \theta_{2,1}\p_y w\|_{L^2(\R^2_+)}\leq C \| \tilde z (\p_x\omega-\p_x A\, \p_{\tilde z} \omega )(x,\cdot)\|_{L^{\frac{12}{5},\infty}_x (\R; L^{\frac43,\infty}_{\tilde z}(\R_+)} \| y^\frac14 \p_y w\|_{L^{12,2}_x (\R; L^2_y(\R_+))},
\end{align*}
which gives from Corollary \ref{cor:lem.pre:1} and \eqref{est.lem.pre:1:1} of Proposition \ref{lem.pre:1},
\begin{align}\label{proof.lem.in:1:3}
\|\theta_{2,1}\p_y w\|_{L^2(\R^2_+)}\leq C \|[\omega,A]\|_{sc} \| w\|_Y.
\end{align}
Next we observe from $\|u\|_{L^\infty (\R^2_+)} + \|A\|_{L^\infty(\R)} \leq 2 \|\omega\|_{L^\infty_x(\R; L^1_y(\R_+))}$ and $\Phi (x,y)(1-\chi_R(y)) \leq Cy(1-\chi_R (y))\leq C R (1+\|A\|_{L^\infty(\R)})$ that 
\begin{align*}
|\theta_{2,2} (x,y)| & \leq C \|\omega\|_{L^\infty_x(\R; L^1_y(\R_+))} |\p_x A(x)|\\
& \quad + C (1-\chi_R (y))  \Phi (x,y) |\p_x A(x)|\\
& \leq C R \|\omega\|_{L^\infty_x(\R; L^1_y(\R_+))} |\p_x A(x)|  + C R   |\p_x A(x)|,
\end{align*}
which implies
\begin{align}\label{proof.lem.in:1:4}
\|\theta_{2,2} \p_y w\|_{L^2(\R^2_+)} & \leq CR \|\omega\|_{L^\infty_x(\R;L^1_y(\R_+))}  \|\p_x A\|_{L^3_x(\R)} \| \p_y w\|_{L^6_x(\R; L^2_y (\R_+))}  \nonumber \\
& \quad + CR \|\p_x A\|_{L^3_x(\R)} \| \p_y w\|_{L^6_x(\R; L^2_y (\R_+))} \nonumber \\
& \leq  C R \|[\omega,A]\|_{sc} \|w\|_Y + C R  \|\p_x A\|_{L^3_x(\R)} \| \p_y\omega\|_{L^6_x(\R; L^2_y (\R_+))}.
\end{align}
Here we have used $\| \p_y w\|_{L^6_x(\R; L^2_y (\R_+))}  \leq C\|w\|_Y$, which follows from Corollary \ref{cor:lem.pre:1} and \eqref{est.lem.pre:1:1} of Proposition \ref{lem.pre:1}.
Next we have from $\|A y \p_y^2\chi_R\|_{L^\infty(\R^2_+)} \leq CR^{-1}$ and $\| A\p_y^2\chi_R\|_{L^2_y(\R_+; L^\infty_x(\R))}\leq CR^{-\frac32}$, 
\begin{align}\label{proof.lem.in:1:5}
\|\theta_{2,3}\p_y w\|_{L^2(\R^2_+)} & \leq C \| A\p_y^2\chi_R \p_y w\|_{L^2(\R^2_+)}  \nonumber \\
& \leq C\| A\p_y^2 \chi_R \p_y w_{in} \|_{L^2(\R^2_+)} + C \| A\p_y^2 \chi_R \p_y w_b\|_{L^2(\R^2_+)} \nonumber \\
& \leq C \| A y \p_y^2 \chi_R \|_{L^\infty (\R^2_+)} \| y^{-1} \p_y w_{in}\|_{L^2(\R^2_+)} \nonumber \\
& \quad + C \| A\p_y^2\chi_R \|_{L^2_y(\R_+; L^\infty_x(\R))}  \|\p_y w_b \|_{L^\infty_y (\R_+; L^2_x(\R))}\nonumber \\
& \leq CR^{-1} \|\p_y^2 w_{in}\|_{L^2(\R^2_+)} + C R^{-\frac32} \|\p_y w_b \|_{L^\infty_y (\R_+; L^2_x(\R))}.
\end{align} 
Here we have used the Hardy inequality in the last line.
Finally, since $\|\theta_3\|_{L^\infty (\R^2_+)} \leq CR^{-1}$, we have 
\begin{align}\label{proof.lem.in:1:6}
\| \theta_3\p_y^2 w\|_{L^2(\R^2_+)}\leq CR^{-1} \|\p_y^2w \|_{L^2(\R^2_+)}.
\end{align}
Collecting \eqref{proof.lem.in:1:1}, \eqref{proof.lem.in:1:3}, \eqref{proof.lem.in:1:4}, \eqref{proof.lem.in:1:5}, and \eqref{proof.lem.in:1:6}, we obtain \eqref{est.lem.in:1:1} and \eqref{est.lem.in:1:2}. The proof is complete.
\end{proof}

The main result of this subsection is as follows.
\begin{proposition}\label{prop.in:1} Let $[\omega,A]$ be an $L^2$ strong solution to \eqref{ptd:3} and $\omega\in L^\infty (\R^2_+)$. There exist $R_1\geq R_0$ and $\delta_1\in (0,\delta_0]$ such that if $R\geq R_1$ and $R\|[\omega,A]\|_{sc}\leq \delta_1$, then 
\begin{align}\label{est.prop.in:1}
\| w_{in}\|_Y\leq C (R\| [\omega, A]\|_{sc} +R^{-1}) \| w_b \|_Y + C R^{-\frac32} \| \p_y w_b \|_{L^\infty_y(\R_+; L^2_x(\R))}.
\end{align}
\end{proposition}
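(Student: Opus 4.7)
The plan is to apply the a priori bound \eqref{est.lem.pre:1:2} of Proposition \ref{lem.pre:1} to $w_{in}\in X_Q$ with $Q\equiv 0$. The crucial gain over applying that bound to $w$ itself is the homogeneous condition $\p_y w_{in}|_{y=0}=0$ inherited from the second equation of \eqref{ptd:4}, which kills the boundary-trace term $\||\p_x|^{1/6}\gamma_{\p\R^2_+}\p_y w_{in}\|_{L^2(\R)}$ and reduces the problem to controlling $\|y\p_x w_{in}-\p_y^2 w_{in}\|_{L^2(\R^2_+)}$. By the first equation of \eqref{ptd:4} this equals the $L^2$-norm of the source built from the $\theta_j$, for which Lemma \ref{lem.in:1} is tailored.

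Plugging in the two bounds of Lemma \ref{lem.in:1}, together with the decomposition $w=w_{in}+w_b$ and the triangle inequality $\|w\|_Y\leq\|w_{in}\|_Y+\|w_b\|_Y$, I would arrive schematically at
\begin{align*}
\|w_{in}\|_Y &\leq C\bigl(R\|[\omega,A]\|_{sc}+R^{-1}\bigr)\bigl(\|w_{in}\|_Y+\|w_b\|_Y\bigr) + CR\|\p_x A\|_{L^3(\R)}\|\p_y \omega\|_{L^6_xL^2_y} \\
&\quad + CR^{-1}\|\p_y^2 w_{in}\|_{L^2(\R^2_+)} + CR^{-3/2}\|\p_y w_b\|_{L^\infty_y(\R_+;L^2_x(\R))}.
\end{align*}
Since $\|\p_y^2 w_{in}\|_{L^2}\leq \|w_{in}\|_Y$, the two contributions $CR^{-1}\|w_{in}\|_Y$ and $CR^{-1}\|\p_y^2 w_{in}\|_{L^2}$ can be absorbed into the left-hand side for $R\geq R_1$ large enough, and likewise the contribution $CR\|[\omega,A]\|_{sc}\|w_{in}\|_Y$ can be absorbed whenever $R\|[\omega,A]\|_{sc}\leq \delta_1$ is small enough.

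The one non-routine step is the middle contribution $CR\|\p_x A\|_{L^3}\|\p_y \omega\|_{L^6_xL^2_y}$, because $\|\p_x A\|_{L^3(\R)}$ is \emph{not} part of the scale-invariant norm $\|[\omega,A]\|_{sc}$. Here I invoke the elliptic smoothing of $A$ developed in Subsection \ref{subsec.A}: Proposition \ref{prop.A} combined with Lemma \ref{lem.I} gives $\||\p_x|^{5/6}A\|_{H^{4/3}(\R)}\leq C\|w_{in}\|_Y$, and the one-dimensional Sobolev embedding $\dot H^{1/6}(\R)\hookrightarrow L^3(\R)$ then yields
\[
\|\p_x A\|_{L^3(\R)} \leq C\||\p_x|^{7/6}A\|_{L^2(\R)} \leq C\||\p_x|^{5/6}A\|_{H^{4/3}(\R)} \leq C\|w_{in}\|_Y.
\]
Since $\|\p_y\omega\|_{L^6_xL^2_y}\leq\|[\omega,A]\|_{sc}$, the troublesome term thus becomes $CR\|[\omega,A]\|_{sc}\|w_{in}\|_Y$ and is absorbed under the same smallness hypothesis. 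After all absorptions only the $w_b$-dependent pieces remain, producing \eqref{est.prop.in:1}.

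The main obstacle, as just indicated, is this bootstrap on $\|\p_x A\|_{L^3}$: without the $|\p_x|^{4/3}$-elliptic gain for $A$ from Proposition \ref{prop.A}, this factor could not be traded for $\|w_{in}\|_Y$ and the argument would effectively lose a derivative in $x$. In this sense the estimate \eqref{est.prop.in:1} depends in an essential way on the structural observation (2) highlighted in the introduction.
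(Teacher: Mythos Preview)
Your proposal is correct and follows the paper's proof essentially line by line: apply \eqref{est.lem.pre:1:2} to $w_{in}$ (using $\p_y w_{in}|_{y=0}=0$ to drop the trace term), bound the resulting source by Lemma \ref{lem.in:1}, split $\|w\|_Y\leq\|w_{in}\|_Y+\|w_b\|_Y$, convert $\|\p_x A\|_{L^3}$ into $C\|w_{in}\|_Y$ via Proposition \ref{prop.A} and the Sobolev embedding, and absorb. The only cosmetic difference is that you keep the $CR^{-1}\|\p_y^2 w_{in}\|_{L^2}$ contribution from \eqref{est.lem.in:1:2} explicit before absorbing it, whereas the paper folds it directly into the $CR^{-1}\|w_{in}\|_Y$ piece of $C(R\|[\omega,A]\|_{sc}+R^{-1})\|w_{in}\|_Y$.
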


\begin{proof} Since $w_{in}\in X_Q$ with $Q\equiv 0$ satisfies \eqref{ptd:4}, Proposition \ref{lem.pre:1} and Lemma \ref{lem.in:1} imply that 
\begin{align*}
\| w_{in}\|_Y&\leq C \big ( \| \theta_1 \p_x w \|_{L^2(\R^2_+)} + \sum_{j=1,2,3}\|\theta_{2,j} \p_y w\|_{L^2(\R^2_+)} + \| \theta_3 \p_y^2 w\|_{L^2(\R^2_+)}\big ) \\
& \leq  C (R\|[\omega,A]\|_{sc} + R^{-1} ) \| w\|_Y + C R  \|\p_x A\|_{L^3_x(\R)} \| \p_y \omega \|_{L^6_x(\R; L^2_y (\R_+))} \\
& \quad + C R^{-1} \big ( \| \p_y^2 w_{in} \|_{L^2(\R^2_+)} + R^{-\frac12} \|\p_y w_b \|_{L^\infty_y (\R_+; L_x^2(\R))}\big )  \\
\begin{split}
& \leq C (R \|[\omega,A]\|_{sc} + R^{-1} ) \big ( \| w_{in} \|_Y + \|w_b \|_{Y}\big ) + C R  \|w_{in} \|_Y \| \p_y \omega \|_{L^6_x(\R; L^2_y (\R_+))} \\
& \quad + C R^{-\frac32} \|\p_y w_b \|_{L^\infty_y (\R_+; L_x^2(\R))}
\end{split}
\end{align*}
Here, in the last line, we have used $\|\p_x A\|_{L^3(\R)} \leq C \| |\p_x|^\frac56 A\|_{H^\frac43(\R)} \leq C\|w_{in}\|_Y$ by the Sobolev embedding inequality and Proposition \ref{prop.A}.
Hence,
\begin{align}
\|w_{in}\|_Y & \leq C (R \|[\omega,A]\|_{sc} + R^{-1} ) \big ( \| w_{in} \|_Y + \|w_b \|_{Y}\big ) + C R^{-\frac32} \|\p_y w_b \|_{L^\infty_y (\R_+; L_x^2(\R))}.
\end{align} 
Here $C>0$ is a numerical constant. Therefore, there exist $R_1\geq R_0$ and $\delta_1\in (0,\delta_0]$ such that, if $R\geq R_1$ and $R\|[\omega,A]\|_{sc}\leq \delta_1$, then \eqref{est.prop.in:1} holds. The proof is complete.
\end{proof}

\section{Proof of Theorem \ref{main.thm:1}}\label{sec.proof.main}
Let us recall that $w_b$ is the solution to \eqref{eq.b} with $g=\p_x|\p_x|A\in H^\frac16(\R)\cap \dot{H}^{-1}(\R)$. Hence, Propositions \ref{prop.in:1} and \ref{prop.pre:1} imply
\begin{align*}
\|w_{in}\|_Y & \leq C (R \| [\omega, A]\|_{sc} +R^{-1}) \| |\p_x|^\frac16 \p_x|\p_x|A\|_{L^2(\R)} + CR^{-\frac32} \| \p_x|\p_x|A\|_{L^2(\R)} \\
& \leq C (R \| [\omega, A]\|_{sc} + R^{-1}) \| |\p_x|^\frac56 A\|_{H^\frac43(\R)},
\end{align*}
which yields from Proposition \ref{prop.A} that 
\begin{align}\label{proof.est.in}
\| w_{in}\|_Y\leq C (R \| [\omega, A]\|_{sc} + R^{-1}) \|w_{in}\|_Y.
\end{align}
Here $C>0$ is a numerical constant. Let us fix $R\geq R_1$ large enough so that $CR^{-1}\leq 1/4$ in \eqref{proof.est.in}. Then, if $CR\|[\omega,A]\|_{sc}\leq 1/4$ for such a fixed $R$, then $w_{in} = 0$. This also implies $[w_b,A] = [0,0]$ by Propositions \ref{prop.A} and \ref{prop.pre:1}. Thus, we obtain $w = 0$, i.e., $[\omega,A] =[0,0]$. The proof of Theorem \ref{main.thm:1} is complete.

\appendix
 
\section{Proof of Lemma \ref{lem.pre:0}}

It is straightforward that the density argument implies \eqref{est.lem.pre:0} for any $f\in \overline{C_0^\infty (\overline{\R^2_+})}^{\|\cdot \|_{L^\infty_xL^1_y}}$. 
Conversely, suppose that $f\in BC_x (\R; L^1_y (\R_+))$, $\p_y f\in L^6_x(\R;L^2_y(\R_+))$, $\p_y^2 f, \, y(\p_x f-\p_xQ\,\p_yf)\in L^2(\R^2_+)$, and \eqref{est.lem.pre:0} holds.
In particular, $f (x,\cdot)$ is bounded and uniformly continuous in $x\in \R$ with valued in $L^1_y(\R_+)$. We also note that $|\p_y f (x,y)|^2\leq 2\| \p_y^2 f(x,\cdot)\|_{L^2_y} \|\p_y f(x,\cdot)\|_{L^2_y}$ implies $\p_y f\in L_x^3(\R; L^\infty_y(\R_+))$ and $\gamma_{\p\R^2_+} \p_y f\in L_x^3(\R)$. Moreover, $|f(x,y)|^2\leq 2\|\p_y f(x,\cdot)\|_{L^\infty_y} \| f(x,\cdot)\|_{L^1_y}$ implies that $f\in L^6_x (\R; L^\infty_y (\R_+))$, and then, together with $f\in L^\infty_x(\R;L^1_y(\R_+))$, we have $f\in L^{12}_x(\R; L^2_y (\R_+))$. Let  us introduce the extention
\begin{align*}
\tilde f (x,y) = 
\begin{cases}
& f (x,y), \quad y>0,\\
& \displaystyle \sum_{j=1}^3 (-j)^k \lambda_j f (x,-jy), \quad y<0,
\end{cases}
\qquad \sum_{j=1}^3 (-j)^k \lambda_j=1, ~~k=0,1,2.
\end{align*}
Then, $\tilde f\in BC_x (\R; L^1_y (\R))$,  $\p_y \tilde f\in L^6_x (\R; L^2_y(\R))$,  $y\p_x f, \, \p_y^2 \tilde f \in L^2(\R^2)$, and $\displaystyle \lim_{|x|\rightarrow \infty} \|\tilde f(x,\cdot)\|_{L^1_y(\R)}=0$ holds. However, we do not have $y(\p_x \tilde f- \p_x Q\, \p_y \tilde f)\in L^2(\R^2)$ in general,  but
\begin{align*}
& y (\p_x \tilde f - \p_x Q(x) \, \p_y \tilde f ) \\
& = y \sum_{j=1}^3 \lambda_j  \p_x f (x,-jy) - y \p_x Q (x)\, \sum_{j=1}^3 (-j) \lambda_j (\p_y f) (x,-j y)\\
& =\sum_{j=1}^3 \lambda_j  y \big (\p_x f (x,-jy) -\p_x Q(x) \,(\p_y f) (x,-jy)\big ) +\sum_{j=1}^3 y\p_x Q (x) \lambda_j  (1-j)  (\p_y f) (x,-j y) 
\end{align*}
for $y<0$ implies that $y(\p_x \tilde f- \p_x Q\, \p_y \tilde f)\in L_x^2(\R; L^2_y (\{y>-4\}))$.
Then we set 
 \begin{align*}
\tilde f_n & =   j_{\epsilon_n} *_x j_{\epsilon_n} *_y (\psi_{1,n} \psi_{2,m} \tilde f)~~~{\rm with}~m=m(n),
\end{align*}
where $j_\epsilon$ is the one-dimensional Friedrichs mollifier, and $\psi_{1,n} (x) =\psi (\frac{x}{n})$and $\psi_{2,m} (y) = \psi (\frac{y}{m})$ are smooth cut-off functions with $\psi_j (r)=1$ for $|r|\leq 1$ and $\psi_j (r)=0$ for $|r|\geq 2$. We will take $m=m(n)=n^\frac{5}{12}$, while $\epsilon_n$ is chosen small enough so that \eqref{ep_n} below is satisfied. We set $f_n$ as the restriction of $\tilde f_n$ to $\overline{\R^2_+}$. It is clear that $f_n\in C_0^\infty (\overline{\R^2_+})$. 
Moreover, by using the fact that $\tilde f$ is unifromly continuous in $x\in \R$ with valued in $L^1_y(\R)$ and  $\displaystyle \lim_{|x|\rightarrow \infty} \|\tilde f(x,\cdot)\|_{L^1_y(\R)}=0$, we have
\begin{align*}
\| f-f_n \|_{L^\infty_x(\R; L^1_y (\R_+))} \leq \| \tilde f-\tilde f_n\|_{L^\infty_xL^1_y}&\leq \|\tilde f-j_{\epsilon_n}*_x j_{\epsilon_n}*_y \tilde f\|_{L^\infty_x L^1_y} + \| (1-\psi_{1,n} \psi_{2,m(n)}) \tilde f\|_{L^\infty_xL^1_y} \\
&\rightarrow 0 \quad (n\rightarrow \infty),
\end{align*}
and 
\begin{align*}
\|\p_y f-\p_y f_n \|_{L^6_x(\R; L_y^2(\R_+))} &\leq \|\p_y \tilde f- \p_y \tilde f_n\|_{L^6_x L^2_y} \\
& \leq \| \p_y \tilde f- j_{\epsilon_n} *_x j_{\epsilon_n} *_y (\psi_{1,n}\psi_{2,m(n)} \p_y \tilde f )\|_{L^6_xL_y^2} + \|  \psi_{1,n} \p_y \psi_{2,m(n)}  \tilde f\|_{L^6_xL_y^2} \\
& \leq \| \p_y \tilde f- j_{\epsilon_n} *_x j_{\epsilon_n} *_y (\psi_{1,n}\psi_{2,m(n)} \p_y \tilde f) \|_{L^6_xL_y^2} + C m(n)^{-1} n^\frac{1}{12} \| \tilde f\|_{L^{12}_xL^2_y}\\
& \rightarrow 0 \quad (n\rightarrow \infty),
\end{align*}
and similarly, 
\begin{align*}
\| \p_y^2  f-\p_y^2 f_n \|_{L^2} & \leq \|\p_y^2 \tilde f-\p_y^2\tilde f_n\|_{L^2}\\
& \leq \| \p_y^2 \tilde f- j_{\epsilon_n} *_x j_{\epsilon_n} *_y (\psi_{1,n}\psi_{2,m(n)} \p_y^2 \tilde f)\|_{L^2} \\
& \quad + 2 \|  \psi_{1,n} \p_y \psi_{2,m(n)}  \p_y f\|_{L^2} + \|  \psi_{1,n} \p_y^2 \psi_{2,m(n)}  \tilde f\|_{L^2} \\
& \leq \| \p_y^2 f- j_{\epsilon_n} *_x j_{\epsilon_n} *_y  (\psi_{1,n}\psi_{2,m(n)} \p_y^2 \tilde f) \|_{L^2} \\
& + Cm(n)^{-1} n^\frac13 \| \p_y f\|_{L^6_xL^2_y}  + C m(n)^{-2} n^\frac{5}{12} \| \tilde f\|_{L^{12}_xL_y^2}\\
& \rightarrow 0 \quad (n\rightarrow \infty).
\end{align*}
Next we set $h_n =  \psi_{1,n} \psi_{2,m(n)} \tilde f$ and write $S=y(\p_x - \p_x Q\,\p_y)$ to simplify the notation. Then,
\begin{align*}
\| S f - S f_n \|_{L^2(\R^2_+)} & \leq \| S \big (1- \psi_{1,n} \psi_{2,m(n)} \big ) f \|_{L^2(\R^2_+)}  + \| Sh_n - S   j_{\epsilon_n} *_x  j_{\epsilon_n} *_y  h_n \|_{L^2(\R^2_+)}  \\
& \leq \|  \big (1- \psi_{1,n} \psi_{2,m(n)} \big ) S f \|_{L^2(\R^2_+)} + \| y\p_x \psi_{1,n} \psi_{2,m(n)}  f \|_{L^2(\R^2_+)}\\
& \quad + \| y \p_x Q\, \psi_{1,n} \p_y \psi_{2,m(n)} f \|_{L^2(\R^2_+)} + \| Sh_n - S   j_{\epsilon_n} *_x  j_{\epsilon_n} *_y  h_n \|_{L^2(\R^2_+)}\\
& \leq  \|  \big (1- \psi_{1,n} \psi_{2,m(n)} \big ) S f \|_{L^2(\R^2_+)} + C m(n) n^{-1} n^\frac{5}{12}\| f \|_{L^{12}_xL^2_y}\\
& \quad + C \| \p_x Q\|_{L^\frac{12}{5}_x} \| f \|_{L^{12}_x(\R; L^2_y (\{m(n)\leq y\leq 2 m(n)\}))} + \| Sh_n - S   j_{\epsilon_n} *_x  j_{\epsilon_n} *_y  h_n \|_{L^2(\R^2_+)}.
\end{align*}
It is clear that the first three terms in tha last line tend to $0$ as $n\rightarrow \infty$.
Let us consider the term $\| Sh_n - S   j_{\epsilon_n} *_x  j_{\epsilon_n} *_y  h_n \|_{L^2(\R^2_+)}$.
We see 
\begin{align}
&y(\p_x- \p_x Q (x)\, \p_y ) (h_n -  j_{\epsilon_n} *_x  j_{\epsilon_n} *_y  h_n) \nonumber \\
& =y(\p_x- \p_x Q(x)\, \p_y ) \int_{\R^2} \big ( h_n (x,y)  - h_n (x-\epsilon_n\tilde x, y-\epsilon_n\tilde y) \big ) \, j(\tilde x) j(\tilde y) \, d\tilde x d\tilde y  \nonumber \\
& = \int_{\R^2} \big ( S h_n (x,y)  - (S h_n) (x-\epsilon_n \tilde x, y-\epsilon_n \tilde y) \big ) \, j(\tilde x) j(\tilde y) \, d\tilde x d\tilde y  \nonumber \\
& \quad + \int_{\R^2}  \tilde y h_n (x-\epsilon_n\tilde x, y-\epsilon_n \tilde y) \, \p_{\tilde x} j(\tilde x) j(\tilde y) \, d\tilde x d\tilde y  \nonumber \\
& \quad + \int_{\R^2} \big ( (y-\epsilon_n\tilde y) \p_x Q(x-\epsilon_n \tilde x) - y\p_x Q(x)\big ) \p_y h_n (x-\epsilon_n\tilde x, y-\epsilon_n\tilde y) \, j(\tilde x) j(\tilde y) \, d\tilde x d\tilde y\nonumber \\
& = \int_{\R^2} \big ( S h_n (x,y)  - (S h_n) (x-\epsilon_n \tilde x, y-\epsilon_n\tilde y) \big ) \, j(\tilde x) j(\tilde y) \, d\tilde x d\tilde y  \nonumber \\
& \quad + \int_{\R^2}   \big (h_n (x-\epsilon_n \tilde x, y-\epsilon_n\tilde y) - h_n (x-\epsilon_n\tilde x, y) \big ) \, \p_{\tilde x} j(\tilde x) \tilde y j(\tilde y) \, d\tilde x d\tilde y  \nonumber \\
& \quad + \int_{\R^2} \big ( (y-\epsilon_n\tilde y) \big (\p_x Q(x-\epsilon_n\tilde x) - \p_x Q(x) \big ) \p_y h_n (x-\epsilon_n\tilde x, y-\epsilon_n\tilde y) \, j(\tilde x) j(\tilde y) \, d\tilde x d\tilde y\nonumber \\
& \quad - \epsilon_n \int_{\R^2} \tilde y \p_x Q(x) \p_y h_n (x-\epsilon_n \tilde x, y-\epsilon_n\tilde y) \, j(\tilde x) j(\tilde y) \, d\tilde x d\tilde y.\nonumber 
\end{align}
Here we have used the fact that $\int_\R \tilde y j(\tilde y)d\tilde y=0$ since $j$ can be chosen as an even function. Hence we have 
\begin{align*}
& \| y(\p_x- \p_x Q (x)\, \p_y ) (h_n -  j_{\epsilon_n} *_x  j_{\epsilon_n} *_y  h_n)\|_{L^2(\R^2_+)} \\
& \leq \int_{\R^2} \| Sh_n - Sh_n (\cdot-\epsilon_n \tilde x, \cdot-\epsilon_n \tilde y)\|_{L^2(\R^2_+)}  j(\tilde x) j(\tilde y) \, d\tilde x d\tilde y  \nonumber \\
& \quad + C\int_{|\tilde y|\leq 2} \| h_n (\cdot, \cdot-\epsilon_n \tilde y)- h_n \|_{L^2}  \, d\tilde y\\
& \quad  + C m(n) \int_\R \| \p_x Q(\cdot-\epsilon_n \tilde x) - \p_x Q\|_{L^3_x} j(\tilde x) \, d\tilde x \| \p_y h_n \|_{L^6_xL^2_y} \\
& \quad + \epsilon_n \| \p_x Q\|_{L^3_x} \| \p_y h_n \|_{L^6_xL^2_y}\\
& \leq \int_{\R^2} \| Sh_n - Sh_n (\cdot-\epsilon_n \tilde x, \cdot-\epsilon_n \tilde y)\|_{L^2}  j(\tilde x) j(\tilde y) \, d\tilde x d\tilde y  \nonumber \\
& \quad + C\epsilon_n  \| \p_y h_n\|_{L^2}\\
& \quad  + C m(n) \int_\R \| \p_x Q(\cdot-\epsilon_n \tilde x) - \p_x Q\|_{L^3_x} j(\tilde x) \, d\tilde x  \big (\| \p_y \tilde f \|_{L^6_xL^2_y}  + m(n)^{-1} n^\frac{1}{12} \| \tilde f \|_{L^{12}_xL^2_y} \big )\\
& \quad + \epsilon_n \| \p_x Q\|_{L^3_x} (\| \p_y \tilde f \|_{L^6_xL^2_y}  + m(n)^{-1} n^\frac{1}{12} \| \tilde f \|_{L^{12}_xL^2_y})\\
& \leq \int_{\R^2} \| S\tilde f   - S \tilde f (\cdot-\epsilon_n \tilde x, \cdot-\epsilon_n \tilde y)\|_{L^2(\R^2_+)}  j(\tilde x) j(\tilde y) \, d\tilde x d\tilde y  \nonumber \\
& \quad  + C m(n)n^{-1}n^\frac{5}{12}  \| \tilde f \|_{L^{12}_x L^2_y}+ C \|\p_x Q\|_{L^\frac{12}{5}_x} \| \tilde f \|_{L^{12}_x L^2_y (\{m(n) \leq |y|\leq 2 m(n)\})}\\ 
& \quad + C\epsilon_n  \big ( m(n)^{-1} n^\frac{5}{12} \| \tilde f \|_{L^{12}_x L^2_y} + n^\frac13 \| \p_y \tilde f \|_{L^6_xL^2_y} \big )\\
& \quad  + C m(n) \int_\R \| \p_x Q(\cdot-\epsilon_n \tilde x) - \p_x Q\|_{L^3_x} j(\tilde x) \, d\tilde x  \big (\| \p_y \tilde f \|_{L^6_xL^2_y}  + \| \tilde f\|_{L^{12}_xL^2_y} \big )\\
& \quad + \epsilon_n \| \p_x Q\|_{L^3_x} (\| \p_y \tilde f \|_{L^6_xL^2_y}  + m(n)^{-1} n^\frac{1}{12} \| \tilde f \|_{L^{12}_xL^2_y})\\
& \leq  \int_{\R^2} \| S\tilde f  - S \tilde f (\cdot-\epsilon_n \tilde x, \cdot-\epsilon_n \tilde y)\|_{L^2(\R^2_+)}  j(\tilde x) j(\tilde y) \, d\tilde x d\tilde y  \nonumber \\
& \quad  + Cn^{-\frac16} \| \tilde f\|_{L^6_xL^2_y} + C \|\p_x Q\|_{L^\frac{12}{5}_x}\| \tilde f \|_{L^{12}_x L^2_y (\{m(n) \leq |y|\leq 2 m(n)\})}\\ 
& \quad + C\epsilon_n  n^\frac13  (1+\|\p_x Q\|_{L^3_x}) \big (  \| \tilde f \|_{L^{12}_x L^2_y} +  \| \p_y \tilde f\|_{L^6_xL^2_y} \big )\\
& \quad  + C n^\frac{5}{12} \int_\R \| \p_x Q(\cdot-\epsilon_n \tilde x) - \p_x Q\|_{L^3_x} j(\tilde x) \, d\tilde x  \big (\| \p_y \tilde f \|_{L^6_xL^2_y}  + \| \tilde f\|_{L^{12}_xL^2_y} \big ).
\end{align*}
Now we take $\epsilon_n$ so that 
\begin{align}\label{ep_n}
\epsilon_n n^\frac13 + n^\frac{5}{12}  \int_\R \| \p_x Q(\cdot-\epsilon_n \tilde x) - \p_x Q\|_{L^3_x} j(\tilde x) \, d\tilde x \leq \frac1n.
\end{align}
This gives 
\begin{align*}
& \| y(\p_x- \p_x Q (x)\, \p_y ) (h_n -  j_{\epsilon_n} *_x  j_{\epsilon_n} *_y  h_n)\|_{L^2} \\
&\leq  \int_{\R^2} \| S\tilde f  - S \tilde f (\cdot-\epsilon_n \tilde x, \cdot-\epsilon_n \tilde y)\|_{L^2(\R^2_+)}  j(\tilde x) j(\tilde y) \, d\tilde x d\tilde y  \nonumber \\
& \quad  + Cn^{-\frac16} \| \tilde f\|_{L^6_xL^2_y} + C \|\p_x Q\|_{L^\frac{12}{5}_x}\| \tilde f \|_{L^{12}_x L^2_y (\{m(n) \leq |y|\leq 2 m(n)\})}\\ 
& \quad + Cn^{-1}  (1+\|\p_x Q\|_{L^3_x}) \big (  \| \tilde f \|_{L^{12}_x L^2_y} +  \| \p_y \tilde f\|_{L^6_xL^2_y} \big ).
\end{align*}
It suffices to consider the first term of the right-hand side. Since $S\tilde f\in L^2_x (\R; L^2_y (\{y>-4\}))$, by introducing the cut-off $\varphi(y)=1$ for $y\geq -1$ and $\varphi (y) =0$ for $y\leq -2$, we have $\varphi S\tilde f\in L^2(\R^2)$ and
\begin{align*}
& \int_{\R^2} \| S\tilde f  - S \tilde f (\cdot-\epsilon_n \tilde x, \cdot-\epsilon_n \tilde y)\|_{L^2(\R^2_+)}  j(\tilde x) j(\tilde y) \, d\tilde x d\tilde y \\
 & = \int_{\R^2} \| \varphi S\tilde f  - (\varphi S \tilde f) (\cdot-\epsilon_n \tilde x, \cdot-\epsilon_n \tilde y) \|_{L^2(\R^2_+)}  j(\tilde x) j(\tilde y) \, d\tilde x d\tilde y\\
 & \leq  \int_{\R^2} \| \varphi S\tilde f  - (\varphi S \tilde f) (\cdot-\epsilon_n \tilde x, \cdot-\epsilon_n \tilde y) \|_{L^2} \,  j(\tilde x) j(\tilde y) \, d\tilde x d\tilde  y\\
 & \rightarrow 0 \quad (n\rightarrow \infty).
\end{align*}
The proof is complete.

\section{Proof of Proposition \ref{prop.w}}

For simplicity we write $\Phi$ for $\Phi (x,y)$. We see 
\begin{align*}
\p_y w & = (1-A\p_y \chi_R)(\p_y \omega) (\cdot,\Phi), \\
\begin{split}
\p_x w & = (\p_x \omega)(\cdot,\Phi) -\chi_R  \p_x A \,(\p_y \omega)(\cdot,\Phi) \\
& =(\p_x \omega)(\cdot,\Phi) - \p_x A \,(\p_y \omega)(\cdot,\Phi) +  (1-A\p_y \chi_R)^{-1}(1-\chi_R) \p_x A\, \p_y w,
\end{split}
\end{align*}
and 
\begin{align*}
\p_y^2 w & = (1-A\p_y\chi_R)^2 (\p_y^2 \omega) (\cdot,\Phi) - A\p_y^2\chi_R (\p_y\omega) (\cdot,\Phi)\\
& = (\p_y^2 \omega) (\cdot,\Phi)  + \big (-2A\p_y\chi_R+(A\p_y\chi_R)^2\big ) (\p_y^2\omega) (\cdot,\Phi) -A \p_y^2\chi_R (\p_y\omega) (\cdot,\Phi)\\
& = \p_y^2 \omega (\cdot,\Phi)  + (1-A\p_y\chi_R)^{-2} \big (-2A\p_y\chi_R+(A\p_y\chi_R)^2\big ) \p_y^2w \\
& \quad + (1-A\p_y\chi_R)^{-3} \big (-2A\p_y\chi_R+(A\p_y\chi_R)^2\big ) A \p_y^2\chi_R   \p_y w  \\
& \quad -  (1-A\p_y \chi_R)^{-1} A \p_y^2\chi_R \, \p_y w\\
& = \p_y^2 \omega (\cdot,\Phi)  +\theta_3 \p_y^2 w + \theta_{2,3}\p_y w .
\end{align*}
Let us recall that $\omega$ satisfies 
\begin{align*}
y\p_x\omega-\p_y^2\omega = -u\p_x\omega - \big ( - y\p_x A + \int_0^y \int_z^\infty (\p_x\omega - \p_x A\, \p_{\tilde z} \omega ) d\tilde zd z - \p_x A\, u\big ) \p_y \omega,
\end{align*}
where the identity \eqref{eq.v} for $v$ is used. Hence,
\begin{align}
y (\p_x\omega -\p_x A\,\p_y\omega) -\p_y^2 \omega = -u (\p_x\omega -\p_x A\,\p_y \omega) - \int_0^y \int_z^\infty (\p_x\omega - \p_x A\, \p_{\tilde z}\omega ) d\tilde zd z  \, \p_y \omega.
\end{align}
Thus, we have from $y=\Phi + A\chi_R$,
\begin{align*}
y\p_xw-\p_y^2 w & = \Phi \big ( (\p_x \omega)(\cdot,\Phi) - \p_x A \,(\p_y \omega)(\cdot,\Phi)\big ) -(\p_y^2 \omega )(\cdot,\Phi)\\
& \quad + A\chi_R \big ( (\p_x \omega)(\cdot,\Phi)-\p_x A \,(\p_y \omega)(\cdot,\Phi)\big )  +  y (1-A\p_y \chi_R)^{-1}(1-\chi_R) \p_x A\, \p_y w\\
& \quad  -\theta_3  \p_y^2w -\theta_{2,3} \p_y w\\
& = -u (\cdot,\Phi) \big ( (\p_x\omega) (\cdot,\Phi) - \p_x A\, (\p_y\omega) (\cdot,\Phi)\big ) - \theta_{2,1} \p_y w\\
& \quad + A\chi_R \big ( (\p_x \omega)(\cdot,\Phi) - \p_x A \,(\p_y \omega)(\cdot,\Phi)\big ) +  y (1-A\p_y \chi_R)^{-1}(1-\chi_R) \p_x A\, \p_y w\\
& \quad -\theta_3  \p_y^2w -\theta_{2,3} \p_y w\\
& = -\theta_1 \p_x w - (u(\cdot,\Phi) - \Phi)   (1-A\p_y \chi_R)^{-1}(1-\chi_R) \p_x A\, \p_y w  \\
& \quad -\theta_{2,1}\p_yw -\theta_3  \p_y^2w -\theta_{2,3} \p_y w\\
& = -\theta_1\p_xw -\sum_{j=1,2,3}\theta_{2,j} \p_y w -\theta_3\p_y^2 w.
\end{align*}
Since $w=\omega$ near $y=0$ it is clear that $\p_yw|_{y=0}=\p_y\omega|_{y=0}$. 
Finally, we have from $I_\infty[\omega]=A$ that $\displaystyle \int_0^\infty \omega (x,\Phi(x,y) ) \p_y \Phi (x,y) \, d y = A (x)$, that is, $I_\infty [w (1-A\p_y \chi_R)] =A$. The proof is complete.

\vspace{5 mm}

\noindent \textbf{Acknowledgements:} SI gratefully acknowledges support by NSF award DMS2306528, a UC Davis Society of Hellman Fellowship award, and the hospitality of Kyoto University over the 2023 KTGU Mathematics Workshop during which this work was initiated. YM acknowledges the support of JSPS KAKENHI Grant Number 20K03698, 19H05597, 20H00118, 21H00991, 21H04433, 23H01082, 23K25779.


\begin{thebibliography}{}

\bibitem{Bou2} F. Bouchut.
\newblock Hypoelliptic regularity in kinetic equations. J. Math. Pures Appl. {\bf 81} (2002) 1135-1159.

\bibitem{DM} A.-L. Dalibard and N. Masmoudi. 
\newblock Separation for the stationary Prandtl equation. Publ. Math. IHES, {\bf 130} (2019), 187-297.

\bibitem{DGV} H. Dietert and D. G{\'e}rard-Varet. 
\newblock On the ill-posedness of the triple deck model. SIAM J. Math. Anal., {\bf 54} (2022) no. 2.

\bibitem{GVIM} D. G{\'e}rard-Varet, S. Iyer, and Y. Maekawa.
\newblock Improved well-posedness for the Triple-Deck and related models via concavity. J. Math. Fluid. Mech., {\bf 25} (2023), no. 69.

\bibitem{GraOh} L. Grafakos and S. Oh.
\newblock The Kato-Ponce inequality. Comm. Partial Differential Equations {\bf 39} (2014), no. 6, 1128-1157.

\bibitem{GuoIyer} Y. Guo and S. Iyer. 
\newblock Regularity and expansion for steady Prandtl equations. Comm. Math. Phys., {\bf 382} (2021), no. 3, 1403-1447.

\bibitem{IyerBlasius} S. Iyer. 
\newblock On global-in-x stability of {B}lasius profiles. Arch. Ration. Mech. Anal., {\bf 237} (2020), no. 2, 951-998.

\bibitem{IMRev} S. Iyer and N. Masmoudi. 
\newblock Reversal in the stationary {P}randtl equations. arXiv:2203.02845, March 2022.

\bibitem{IV} S. Iyer and V. Vicol. 
\newblock Real analytic local well-posedness for the triple deck. Comm. Pure Appl. Math., {\bf 74} (2021) no. 8, 1641-1684.

\bibitem{Lagree} P.-Y. Lagr{\'e}e. 
\newblock Notes on the triple deck. 

\url{http://www.lmm.jussieu.fr/~lagree/COURS/CISM/ TriplePont_CISM.pdf}, 2020.

\bibitem{Meyer} R. E. Meyer.
\newblock A view of the {T}riple {D}eck. SIAM. J. Appl. Math. {\bf 43} (1983), no. 4, 639-663.

\bibitem{O'N} R. O'Neil.
\newblock Convolution operators and $L(p,q)$ spaces. Duke Math. J. {\bf 30} (1963) 129-142.

\bibitem{Oleinik} O. Oleinik. 
\newblock The Prandtl system of equations in boundary layer theory. Dokl. Akad. Nauk S.S.S.R., {\bf 4} (1963) no. 3, 583-586.

\bibitem{Prandtl} L. Prandtl. 
\newblock Motion of fluids with very little viscosity. National Advisory Committee for Aeronautics Technical Memorandum, (1904).

\bibitem{SchGer} H. Schlichting and K. Gersten. 
\newblock {\it Boundary-Layer Theory.} Springer Berlin, Heidelberg, 2017.

\bibitem{Serrin} J. Serrin. 
\newblock Asymptotic behaviour of velocity profiles in the Prandtl boundary layer theory. Proc. Royal Soc. A, {\bf 299} (1966), no. 1459, 491-507. 

\bibitem{SWZ} W. Shen, Y. Wang, and Z. Zhang.
\newblock Boundary layer separation and local behavior for the steady {P}randtl equation. Adv. Math. {\bf 389} (2021) no. 8.

\bibitem{SM1} S. Matsui and T. Shirota.
\newblock On separation points of solutions to Prandtl boundary layer problem. Hokkaido Math. J. {\bf 13} (1984), no. 1, 92-108.

\bibitem{SM2} S. Matsui and T. Shirota.
\newblock On Prandtl boundary layer problem. North-Holland Math. Stud., {\bf 128} Lecture Notes Numer. Appl. Anal., 8 North-Holland Publishing Co., Amsterdam, (1985), 81-105.

\bibitem{WZ} Y. Wang and Z. Zhang. 
\newblock Asymptotic behavior of the steady {P}randtl equation. Math. Ann. {\bf 387} (2021) 1289-1331.

\bibitem{YZ} Y. Wang and Z. Zhang. 
\newblock Global $C^\infty$ regularity of the steady Prandtl equation with favorable pressure gradient. Ann. Inst. H. Poincare Anal. Non Lineaire, {\bf 38} (2021) no. 6, 1989-2004.

\end{thebibliography}
\end{document}